\theoremstyle{plain}
\newtheorem{thm}{Theorem}[section]
\newtheorem{prop}{Proposition}[section]
\newtheorem{lem}[prop]{Lemma}
\newtheorem{rmk}[prop]{Remark}
\newcommand{\Om}{\Omega}
\numberwithin{equation}{section}
\newcommand {\R} {\mathbb{R}} 
\newcommand {\C} {\mathbb{C}} 
\newcommand {\p} {\partial}
\newcommand{\eps}{\epsilon}
\newcommand{\e}{\epsilon}
\newcommand{\dbar}{\ol{\p}}
\newcommand{\ol}[1]{\overline{#1}}
\newcommand{\oz}{\overline{z}}
\newcommand{\norm}[1]{\lVert #1 \rVert}
\newcommand{\abs}[1]{\lvert #1 \rvert}
\newcommand{\s}{\hspace{0.5pt}}
\DeclareMathOperator {\Imm} {Im}
\DeclareMathOperator{\Id} {Id}
\newcommand{\op}{\overline \p}
\newcommand{\kommentar}[1]{}
\title[An Inverse Problem for the Prescribed Mean Curvature]{An Inverse  Problem for the Prescribed Mean Curvature}
\author[T. Liimatainen]{Tony Liimatainen}
\address{Department of Mathematics and Statistics, University of Jyväskylä, Jyväskylä, Finland \& Department of Mathematics and Statistics, University of Helsinki, Helsinki, Finland}
\curraddr{}
\email{tony.t.liimatainen@jyu.fi, tony.liimatainen@helsinki.fi}
\author[J. Nurminen]{Janne Nurminen}
\address{Computational Engineering, School of Engineering Sciences, Lappeenranta-Lahti University of Technology, Finland \& Department of Mathematics and Statistics, University of Jyväskylä, Jyväskylä, Finland}
\curraddr{}
\email{janne.nurminen@lut.fi, janne.s.nurminen@jyu.fi}
\begin{document}
	\maketitle
	
	\begin{abstract}
We extend the recent study of inverse problems for minimal surfaces by considering the inverse source problem for the prescribed mean curvature equation
\begin{equation*}
    \nabla \cdot \left[ \frac{\nabla u}{(1 + |\nabla u|^2)^{1/2}} \right] = H(x).
\end{equation*}
This work also represents the first treatment of inverse source problems for quasilinear equations.
We prove that in two dimensions, the source function $H$ is uniquely determined by the associated Dirichlet-to-Neumann map. A notable feature of this problem is that although the equation is posed on an Euclidean domain, its linearization yields an anisotropic conductivity equation where the coefficient matrix corresponds to a Riemannian metric $g$ depending on the background solution. 

 The main methodological contribution is the derivation of a coupled nonlinear system of algebraic and geometric partial differential equations from boundary measurements. Similar systems will naturally appear in other inverse problems for quasilinear equations. We solve the system  using a Liouville type uniqueness result for conformal mappings, which recovers the source function uniquely.

		\medskip
		
		\noindent{\bf Keywords.} Inverse problems, inverse source problems, prescribed mean curvature equation, quasilinear elliptic equations.
		
		
	\end{abstract}

    \tableofcontents

\section{Introduction}
We consider an inverse problem for the prescribed mean curvature equation for $2$-dimensional surfaces living in $\R^3$. The equation in $\Omega \subset \R^n$ is 
\begin{equation}\label{PMC}
	\nabla\cdot\left[\frac{\nabla u}{(1+\abs{\nabla u}^2)^{1/2}}\right]=H(x),
\end{equation}
where $H\in C^\alpha(\Omega)$. 
Let us assume for now that the Dirichlet problem for \eqref{PMC} is well-posed on an open subset $\mathcal{N}\subset C^{2,\alpha}(\p \Omega)$.
In this case, the \emph{Dirichlet-to-Neumann map} (DN map) is defined by the usual assignment
   \begin{equation}\label{eq:DNmap}
   \Lambda_{H}: \mathcal{N}\to C^{1,\alpha}(\p \Omega), \qquad f\mapsto \left. \p_\nu u_f\right|_{\p \Omega},
   \end{equation}
   where $u_f$ is the unique (small) solution to $\eqref{PMC}$ with boundary value $f$.
     Here $\nu$ denotes the unit outer normal on $\p \Omega$. 
     
     The well-posedness holds if $H\in C^1(\Omega)$ and is small enough when compared to the isoperimetric constant and mean curvature of the boundary of $\Omega$, see \cite[Theorem 16.10]{gilbarg2001elliptic}. By the same theorem for some
     \begin{equation*}
         f_0\in C^{2,\alpha}(\p \Omega)\text{ there is a unique solution }u_{0}\in C^{2,\alpha}(\ol{\Om}).
     \end{equation*}
     Now we can make a small perturbation by $f\in C^{2,\alpha}(\p \Omega)$ to the boundary value $f_0$ to obtain a unique solution $u_f$ of \eqref{PMC} with $u_f|_{\p\Om}=f_0+f$. Then we can use the implicit function theorem to say that this solution depends smoothly on $f$. Notice that by \cite[Theorem 16.10]{gilbarg2001elliptic} we do not need to restrict ourselves to unique solutions in some neighborhood of $u_0$, sometimes called small solutions in previous works. We refer to  \cite[Theorem 2.1]{liimatainenLin2024} for a related well posedness proof. 
     

We ask the following question:
\begin{center}
	\textbf{Inverse problem:} Can we determine $H$ from the DN map?
\end{center}
This type of an inverse problem is called an inverse source problem. We will give a positive answer to this:

\begin{thm}\label{main_thm}
	Let $H,\tilde H\in C^{\infty}(\ol{\Om})$ be such that $H|_{\p\Om}=\tilde H|_{\p\Om}$ to high order and $f_0\in C^{\infty}(\p\Om)$. Assume that $u_0$ and $\tilde u_0$ solve 
    \begin{equation}\label{main_pde}
 	\left\{\begin{array}{ll}
 		\nabla\cdot\left[\frac{\nabla u}{(1+\abs{\nabla u}^2)^{1/2}}\right]=H(x), & \text{in}\,\, \Om \\
 		u=f_0, & \text{on}\,\, \partial\Om,
 	\end{array} \right.
    \end{equation}
     for $H$ and $\tilde H$ on the right hand side respectively, and with $u_0|_{\p\Om}=\tilde u_0|_{\p\Om}= f_0$. Suppose that the DN maps of \eqref{PMC} satisfy 
	\begin{equation*}
		\Lambda_{H}f=\Lambda_{\tilde H}f,\quad\text{for all } f\in C^{\infty}(\p\Om)
	\end{equation*}
	with $\norm{f_0 - f}_{C^{\infty}(\p\Om)}\leq\delta$ for sufficiently small $\delta>0$. Then $H=\tilde H$ in $\Om$.
\end{thm}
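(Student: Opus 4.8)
The plan is to use the higher order linearization method adapted to the inverse source problem. Since the DN maps agree for all boundary data near $f_0$, and since $u_0 = \tilde u_0$ on $\partial\Omega$ with matching Neumann data (take $f = 0$ in the equality $\Lambda_H f = \Lambda_{\tilde H} f$, or rather note $\Lambda_H f_0 = \Lambda_{\tilde H} f_0$), I would first observe that $u_0$ and $\tilde u_0$ have the same Cauchy data on $\partial\Omega$. Then I would differentiate the family of solutions $u_{f_0 + \varepsilon f}$ in $\varepsilon$ at $\varepsilon = 0$. The crucial point is that the first linearization of the quasilinear operator at the background solution $u_0$ is a linear elliptic operator of the form $\nabla \cdot (A(x)\nabla v)$, where $A(x) = A(\nabla u_0(x))$ is the anisotropic matrix coming from the minimal-surface-type operator; in two dimensions this matrix, after normalization, corresponds to a Riemannian metric $g = g[u_0]$ conformal to a fixed structure. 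So the first-order linearized DN maps being equal forces the DN maps of the two conductivity/Laplace-Beltrami operators $\nabla\cdot(A[u_0]\nabla\cdot)$ and $\nabla\cdot(A[\tilde u_0]\nabla\cdot)$ to coincide.

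Next I would invoke the two-dimensional anisotropic Calderón result (uniqueness up to conformal diffeomorphism fixing the boundary): the equality of these linearized DN maps implies the existence of a conformal diffeomorphism $\Phi : \Omega \to \Omega$, fixing $\partial\Omega$, pulling back $g[\tilde u_0]$ to $g[u_0]$. Here is where the Liouville-type rigidity advertised in the abstract enters: because $g[u_0]$ and $g[\tilde u_0]$ are not arbitrary metrics but are both conformal to the Euclidean metric (being of the explicit form arising from $(1+|\nabla u|^2)$-type weights) and agree to high order at the boundary (since $H = \tilde H$ there forces matching of normal derivatives of $u_0$), the conformal map $\Phi$ must be the identity. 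This is the step I would single out as the main obstacle: ruling out nontrivial conformal self-maps of $\Omega$ that are compatible with the boundary data and with the specific structure of the metrics. I would handle it via a unique continuation / Liouville argument — a holomorphic map of $\Omega$ fixing an arc of the boundary pointwise (or to infinite order at a boundary point) is the identity — combined with the fact that the Cauchy data of $u_0$ and $\tilde u_0$ coincide.

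Once $\Phi = \Id$, I get $g[u_0] = g[\tilde u_0]$ in $\Omega$, which translates into an algebraic relation forcing $\nabla u_0 = \nabla \tilde u_0$ pointwise (the map $p \mapsto$ metric is injective up to the sign/rotation ambiguity that is killed by the boundary condition), hence $u_0 = \tilde u_0$ in $\Omega$ since they share boundary values. With the background solutions now identified, I would pass to the second order linearization: differentiating twice in $\varepsilon$ and using $u_0 = \tilde u_0$, the quadratic terms involving $H$ versus $\tilde H$ must match, and more directly, subtracting the two equations \eqref{main_pde} and using $u_0 = \tilde u_0$ gives immediately $H = \tilde H$ in $\Omega$. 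Actually the cleanest finish is: once $u_0 \equiv \tilde u_0$ on $\Omega$, plug into \eqref{main_pde} to conclude $H = \nabla\cdot[\nabla u_0/(1+|\nabla u_0|^2)^{1/2}] = \tilde H$. So the entire difficulty is concentrated in (i) extracting the conformal map from the first linearization and (ii) the Liouville rigidity forcing it to be trivial; the higher linearizations beyond first order may not even be needed for the source problem, which is a pleasant simplification compared to inverse coefficient problems.

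I should double-check one subtlety: the first linearized equation at $u_0$ has \emph{no zeroth order term and no source}, so its solution space with prescribed Dirichlet data is the usual one and the reduction to anisotropic Calderón is clean; the source $H$ only reappears through its influence on $u_0$ itself. This is why the problem, despite being quasilinear, reduces to a \emph{single} application of 2D anisotropic EIT plus rigidity, rather than an inductive scheme over all orders.
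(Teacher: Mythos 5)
Your proposal correctly identifies the overall framework (linearize, reduce to 2D anisotropic Calderón, break the gauge, conclude $\nabla u_0 = \nabla\tilde u_0$, read off $H=\tilde H$), but there is a genuine gap in how you try to break the gauge, and it leads you to a conclusion that is actually false.

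First, the metric
\[
g^{ab}=\frac{1}{(1+|\nabla u_0|^2)^{1/2}}\Big(\delta^{ab}-\frac{\partial^a u_0\,\partial^b u_0}{1+|\nabla u_0|^2}\Big)
\]
is \emph{not} a scalar multiple of $\delta^{ab}$: it carries a rank-one anisotropic correction $\nabla u_0\otimes\nabla u_0$. So the claim that "both metrics are conformal to the Euclidean metric, being of the explicit form arising from $(1+|\nabla u|^2)$-type weights" is incorrect, and the Liouville step you build on it collapses. Second, the 2D anisotropic Calderón result gives \emph{two} gauges simultaneously: $g=\lambda\,\phi^*\tilde g$ and $q=\lambda^{-1}\phi^*\tilde q$, with a diffeomorphism $\phi$ \emph{and} a conformal factor $\lambda$. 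You only address the diffeomorphism. Even if one could argue $\phi=\Id$ at this stage, that would leave $g=\lambda\tilde g$ with $\lambda$ unknown, which does not yet force $\nabla u_0=\nabla\tilde u_0$. Third, the Liouville rigidity in the paper (Lionheart-type) applies to maps that are conformal \emph{for the Euclidean structure}, i.e.\ $D\phi^{-1}D\phi^{-T}=\lambda^{-1}I$. The diffeomorphism provided by Calderón is only "conformal" in the sense encoded by the $\lambda$-gauge for the anisotropic metrics $g,\tilde g$; it is not a priori a Euclidean conformal map. Establishing that it is one is precisely the work that the paper does via (i) a unique-continuation argument on the potentials $q,\tilde q$ (Lemma~\ref{J_lambda}) to get $\lambda J_\phi=1$ and hence $|\nabla u_0|=|\nabla\tilde u_0|\circ\phi$, and (ii) the \emph{second} linearization, which after CGO asymptotics gives $T=0$, i.e.\ $\phi^*\nabla\tilde u_0=\nabla u_0$. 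Only the combination of these two identities turns the gauge relation $g=\lambda\phi^*\tilde g$ into $D\phi^{-1}D\phi^{-T}=\lambda^{-1}I$, after which Liouville applies. Thus your closing assertion that "the higher linearizations beyond first order may not even be needed" is wrong: the second linearization is essential to kill the diffeomorphism gauge, and the first linearization alone cannot do it, because the metrics are genuinely anisotropic and the two-parameter gauge freedom must be decoupled nonlinearly.
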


\subsection{Motivation}
The prescribed mean curvature (PMC) equation is a prototypical quasilinear elliptic equation of the form
\begin{equation}\label{eq:PMC}
	\nabla \cdot \left( A(\nabla u) \nabla u \right) = H(x),
\end{equation}
where \( A : \R^n \to \R \). The main motivation of the present work is to serve as a first step in solving inverse problems for equations of the above. To the best of our knowledge, it is the first to consider an inverse source problem for a quasilinear elliptic equation.

Another key motivation for our study concerns the method we employ to solve the inverse problem. Our approach is based on the higher order linearization technique. The first step of this method involves solving the Calder\'on problem for the first linearized equation
\begin{equation}\label{eq:linearized}
	\begin{cases}
		\nabla \cdot \left( g^{-1} \nabla v \right) = 0, & \text{in } \Om, \\
		v = f, & \text{on } \partial \Om,
	\end{cases}
\end{equation}
where \( g \) is a positive definite symmetric matrix field, thus a Riemannian metric. This linearized equation is an anisotropic conductivity equation. Therefore, our study provides a new application for the theory of the anisotropic Calder\'on problem, even though the original nonlinear equation is posed on a Euclidean domain.

Regarding the PMC equation itself, it generalizes the minimal surface equation (\( H=0 \)) by requiring a surface to have a specific, pre-assigned mean curvature \( H \) at every point, rather than zero curvature. Physically, this corresponds to modeling surfaces in a state of balanced tension under external forces, such as soap bubbles with internal pressure, rather than area-minimizing soap films. Consequently, the PMC equation serves as a fundamental model in diverse areas, including capillary surfaces, biological membranes, and general relativity, where it arises in the initial data formulation of Einstein's equations \cite{seifert1997configurations, chrusciel2010mathematical}.

Beyond these classical applications, we mention that inverse problems for minimal surfaces have recently gained surprising significance  within the AdS/CFT duality theory in theoretical physics. There, techniques from inverse problems have proven effective in such cases \cite{jokela2025bulk}  where the mathematical
tools traditionally used in physics have been ineffective. 
Due to this surprising application, we mention the works \cite{hung2011holographic, dong2014holographic} that study surfaces with prescribed curvature in holography.

\subsection{Earlier works}

Our work is a new addition to a growing branch of inverse problems for nonlinear partial differential equations. In \cite{isakov1993uniqueness} the author linearized the DN map to solve an inverse problem for a nonlinear PDE. By linearizing one can then use methods for linear equations to solve the problem. In recent years the interest for these has grown and it started with the works \cite{kurylev2018inverse,FO19,LLLS2019inverse}. In these works the method of higher order linearization was introduced. In this method one linearizes the DN map multiple times and solves an inverse problem at each step, gaining new information about the original problem.

This method has been applied to many problems. Quasilinear elliptic isotropic conductivity equations have been considered in \cite{CFKKU, kian2020partial, liimatainen2023calder}. In these works the goal is to recover the nonlinear conductivity. Also inverse problems for the minimal surface equation, quasilinear in nature, on Riemannian manifolds and Euclidean domains have been considered in \cite{ABN_20_minimal,carstea2024calderonproblemriemanniansurface,carstea2023inverseproblemgeneralminimal,carstea2024calderonproblemriemanniansurfaces,nurminen2023,Nurminen2024}, and in the context of AdS/CFT duality in physics in \cite{jokela2025bulk}. Also in \cite{muñozthon2024calderonsproblemharmonicmaps} the author studies a Calderón type problem for harmonic maps. A map between two Riemannian manifolds is harmonic if it satisfies a certain quasilinear equation.

Inverse problems for elliptic second order semilinear equations have been considered in \cite{isakov1994global, victorN, sun2010inverse, imanuvilovyamamoto_semilinear, LLLS2019inverse, LLLS2021b, liimatainen2022inverse, MR4052205,FO19, FLL23, Salo2022, Nurminen2023single, johansson2023inverseproblemssemilinearelliptic, johansson2025inverseproblemssemilinearelliptic} and inverse problems for nonlinear magnetic Schrödinger equations in \cite{Krupchyk2020, Lai2023partial, ma2020note, Krupchyk2024remark}. We also mention the works for nonlinear biharmonic equations \cite{bhatta2025, nurminen2025inverseproblemnonlinearbiharmonic}, $p$-harmonic equation \cite{MR3320025, MR3810150, carstea2025two} and the works for nonlinear fractional equations \cite{Lin2022, lai2019global, LinLiu2023, LaiLin2019}. See also \cite{lassas2025introductioninverseproblemsnonlinear} for a recent survey on this subject.

The above mentioned works consider PDEs with zero right hand side, that is, they do not have a source term. Recently in \cite{liimatainenLin2024,kianliimatainenlin2024} the authors considered inverse problems for nonlinear partial differential equations where the aim is to recover a nonlinear term and a source term. They also demonstrate by example nonlinearities that one can break a gauge appearing in a general case of recovery.

\subsection{Method of proof}
The methods we use can be divided into two main parts. First, we combine information from the Dirichlet-to-Neumann (DN) map with the higher-order linearization technique and complex geometric optics (CGO) solutions for the Schrödinger equation to derive equations for quantities related to the background solution $u_0$ (corresponding to the boundary value $f_0$). These equations are highly coupled, and the second step involves solving for $\nabla u_0$ from this system. A key ingredient at this stage is a Liouville-type uniqueness result for conformal mappings.

To show that $H = \tilde{H}$ in $\Omega$, we proceed as follows. If $u_0$ and $\tilde{u}_0$ are the solutions of the prescribed mean curvature equation corresponding to the sources $H$ and $\tilde{H}$ respectively, and if we establish that $\nabla u_0 = \nabla \tilde{u}_0$, then it follows immediately from the structure of the equation that
\begin{equation*}
    H = \nabla \cdot \left[ \frac{\nabla u_0}{(1 + |\nabla u_0|^2)^{1/2}} \right]
      = \nabla \cdot \left[ \frac{\nabla \tilde{u}_0}{(1 + |\nabla \tilde{u}_0|^2)^{1/2}} \right]
      = \tilde{H}.
\end{equation*}

We now provide a detailed outline of the argument. The first linearization of \eqref{PMC} yields an anisotropic conductivity equation, equivalent to a Schrödinger equation involving the Laplace–Beltrami operator associated with the Riemannian metric whose inverse is
\begin{equation*}
    g^{-1} = \frac{1}{(1 + |\nabla u_0|^2)^{1/2}} \left( I_{2 \times 2} - \frac{\nabla u_0 \otimes \nabla u_0}{1 + |\nabla u_0|^2} \right).
\end{equation*}
This metric explicitly depends on the background solution $u_0$. Using boundary information about $H$, we show that the DN maps for the corresponding Schrödinger equations coincide. Applying \cite[Theorem 1.1]{carstea2024calderonproblemriemanniansurfaces}, we conclude that the metrics associated with $u_0$ and $\tilde{u}_0$, along with their potentials, agree up to natural conformal and diffeomorphism gauges; see Section \ref{sec_first_lin}.

The next step is to break these gauge invariances. Using unique continuation and the specific functional dependence of the potential on the metric $g$, we deduce (see Lemma \ref{J_lambda} and \eqref{norm_u_tilde_u}) that
\begin{equation}\label{eq:norm_nabla_u0}
    |\nabla u_0| = |\nabla \tilde{u}_0| \circ \phi,
\end{equation}
where $\phi: \Omega \to \Omega$ is a diffeomorphism that fixes the boundary.

To strengthen this to full gradient equality, we employ the second linearization, which yields the integral identity:
\begin{align}\label{eq:integral_identity}
    0 = \int_{\Omega} T \cdot \bigg( & \nabla v_0 \, g(\nabla v_1, \nabla v_2) \\
    & + \nabla v_1 \left( \Delta_g(v_0 v_2) - v_2 \Delta_g v_0 - v_0 \Delta_g v_2 \right) \notag \\
    & + \nabla v_2 \left( \Delta_g(v_1 v_0) - v_0 \Delta_g v_1 - v_1 \Delta_g v_0 \right) \bigg)  dV_g, \notag
\end{align}
where $T$ is a
\begin{equation*}
T:=\left(\frac{\lambda J_{\phi} \phi^*\nabla \tilde u_0}{1+\abs{\nabla \tilde u_0}^2|_{\phi}} - \frac{\nabla u_0}{1+\abs{\nabla u_0}^2}\right)\abs{g}^{-1/2},
\end{equation*}
and $v_k$ ($k=0,1,2$) are solutions to the first linearized equation. The analysis of this identity relies on carefully constructed CGO solutions \cite{guillarmou2011identification}, where we choose two solutions without critical points and one with a critical point. Using asymptotic analysis techniques similar to \cite{carstea2024calderonproblemriemanniansurfaces, liimatainen2023calder}, we ultimately conclude that (see \eqref{relation_u_tilde_u_3})
\begin{equation}\label{eq:gradient_equality}
    \nabla u_0 = \phi^* \nabla \tilde{u}_0.
\end{equation}

The final step is to show that \eqref{eq:norm_nabla_u0} and \eqref{eq:gradient_equality} together imply that $\phi$ is the identity map on $\Omega$. This follows from a uniqueness theorem for conformal mappings, which forces $\phi = \text{id}$, see Section \ref{sub_sec_phi_identity}. Consequently, $\nabla u_0 = \nabla \tilde{u}_0$ throughout $\Omega$, and thus $H = \tilde{H}$ as required.

\section{First linearization}\label{sec_first_lin}

To make notation more convenient, we let $F\colon \R^2\to\R^2,$
\begin{equation*}
	F(p):=\frac{p}{(1+\abs{p}^2)^{1/2}}
\end{equation*}
and thus \eqref{PMC} can be written as
\begin{equation}\label{eq:F_H_equation}
	\nabla\cdot F(\nabla u) = H \text{ in } \Om.
\end{equation}
We assume that there exists a solution $u_0\in C^{2,\alpha}(\Omega)$ to \eqref{eq:F_H_equation} corresponding to boundary value $f_0\in C^{2,\alpha}(\p\Omega)$ and then we linearize at this solution. Let $f_1,f_2\in C^{\infty}(\p\Om)$, $\e=(\e_1,\e_2)$ and $\e_1, \e_2>0,$ be such that $f=f_0 + \e_1f_1 + \e_2f_2$ is sufficiently close to $f_0$. Now the first linearization, denoted by 
\[
v_1=\p_{\e_1}u|_{\e=0},
\]
satisfies the first linearized equation
\begin{equation}\label{first_lin}
	\p_a(\p_bF^a(\nabla u)|_{u=u_0}\p_{\e}\p_bu|_{\e=0}) = \p_a(\p_bF^a(\nabla u_0)\p_b v) = 0 \text{ in }\Om,
\end{equation}
where
\begin{equation}\label{first_deriv_F}
	\p_bF^a(p) = \frac{\delta^{ab}}{(1+\abs{p}^2)^{1/2}} - \frac{p_ap_b}{(1+\abs{p}^2)^{3/2}}.
\end{equation}
From now on, we choose to put the indexes up and define the Riemannian metric
\begin{equation}\label{def_g_0}
	g^{ab}=\frac{1}{(1+\abs{\nabla u_0}^2)^{1/2}}\left(\delta^{ab} - \frac{\p^au_0\p^bu_0}{1+\abs{\nabla u_0}^2}\right)=\p_bF^a(\nabla u_0).
\end{equation}
For a proof that $g^{ab}$ is indeed positive definite symmetric matrix we refer to \cite[Chapter 10]{gilbarg2001elliptic}. Thus the first linearization is the elliptic equation
\begin{equation}\label{first_lin_vol0}
	\p_a(g^{ab}\p_bv)=0 \text{ in }\Om.
\end{equation}
For boundary values $f$ close to $f_0$, the boundary value problem
\begin{equation*}
	\left\{\begin{array}{ll}
		\nabla\cdot F(\nabla u)=H, & \text{in}\,\, \Om \\
		u=f_0 + f, & \text{on}\,\, \partial\Om
	\end{array} \right.
\end{equation*}
is well-posed and  the DN map is smooth in the Fréchet sense:
\begin{prop}
	Let $n\geq2$, $\Om\subset\R^n$ be a bounded domain with smooth boundary, $H\in C^{\infty}(\ol{\Om})$ and $f_0\in C^{\infty}(\p\Om)$. Assume that $u_0\in C^{\infty}(\ol{\Om})$ is a solution to
	\begin{equation*}
		\left\{\begin{array}{ll}
			\nabla\cdot F(\nabla u_0)=H, & \text{in}\,\, \Om \\
			u_0=f_0, & \text{on}\,\, \partial\Om.
		\end{array} \right.
	\end{equation*}
	Then there are $\delta, C>0$ such that for any
	\begin{equation*}
		f\in B_{\delta}=\{f\in C^{\infty}(\ol{\Om}) : \norm{f_0 - f}_{C^{\infty}(\ol{\Om})}\leq\delta\}
	\end{equation*}
	there exists a unique small solution $u\in C^{\infty}(\ol{\Om})$ of
	\begin{equation*}
		\left\{\begin{array}{ll}
			\nabla\cdot F(\nabla u)=H, & \text{in}\,\, \Om \\
			u=f_0 + f, & \text{on}\,\, \partial\Om
		\end{array} \right.
	\end{equation*}
	so that $\norm{u_0 - u}_{C^{\infty}(\ol{\Om})}\leq C\delta$. Furthermore there are $C^{\infty}$ maps, in the Fréchet sense,
	\begin{align*}
		S&\colon B_{\delta}\to C^{\infty}(\ol{\Om}), \quad f\mapsto u\\
		\Lambda_H&\colon B_{\delta}\to C^{\infty}(\ol{\Om}), \quad f\mapsto \p_{\nu}u|_{\p\Om}.
	\end{align*}
\end{prop}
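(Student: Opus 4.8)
The plan is to derive the proposition from the implicit function theorem on the Hölder scale $C^{k,\alpha}$, supplemented by linear elliptic theory, an elliptic bootstrap, and a consistency argument to reach $C^{\infty}$; this is essentially the route of \cite[Theorem 16.10]{gilbarg2001elliptic} together with \cite[Theorem 2.1]{liimatainenLin2024}. For each $k\in\N$ introduce the map
\[
\Phi_k\colon C^{k+2,\alpha}(\ol{\Om})\to C^{k,\alpha}(\ol{\Om})\times C^{k+2,\alpha}(\p\Om),\qquad \Phi_k(u)=\bigl(\nabla\cdot F(\nabla u)-H,\ u|_{\p\Om}\bigr).
\]
Since $F(p)=p(1+\abs{p}^2)^{-1/2}$ is real-analytic on $\R^2$ and pointwise composition and multiplication act analytically on Hölder spaces, $\Phi_k$ is real-analytic, and $\Phi_k(u_0)=(0,f_0)$.

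The key point is that the differential $D\Phi_k(u_0)\colon v\mapsto\bigl(\p_a(g^{ab}\p_b v),\ v|_{\p\Om}\bigr)$, with $g^{ab}=\p_bF^a(\nabla u_0)$ the symmetric positive definite field \eqref{def_g_0}, is a Banach space isomorphism. Indeed $Lv:=\p_a(g^{ab}\p_b v)$ is uniformly elliptic, in divergence form, and has no zeroth order term, so for every $(w,\psi)\in C^{k,\alpha}(\ol{\Om})\times C^{k+2,\alpha}(\p\Om)$ the Dirichlet problem $Lv=w$ in $\Om$, $v|_{\p\Om}=\psi$, has a unique solution $v\in C^{k+2,\alpha}(\ol{\Om})$: existence and the Schauder estimate are classical, and uniqueness follows from the energy identity $\int_{\Om}g^{ab}\p_a v\,\p_b v\,dx=0\Rightarrow v\equiv0$ when $w=0$ and $\psi=0$.

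By the analytic implicit function theorem there are $\delta_k>0$, a $C^{\infty}$-neighbourhood of $f_0$, and a real-analytic solution map $S_k\colon f\mapsto u$ for $\Phi_k(u)=(0,f)$ with $\norm{S_k(f)-u_0}_{C^{k+2,\alpha}}\le C_k\norm{f-f_0}_{C^{k+2,\alpha}}$. If $u,u'$ are two small solutions with the same boundary value, then $w:=u-u'$ solves $\p_a(\tilde a^{ab}\p_b w)=0$, $w|_{\p\Om}=0$, with $\tilde a^{ab}=\int_0^1\p_bF^a(\nabla u'+t\nabla w)\,dt$ uniformly elliptic near $u_0$, so $w\equiv0$ by the same energy argument; this gives the uniqueness claim already at the $C^{2,\alpha}$ level. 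Rewriting the equation in nondivergence form $\p_bF^a(\nabla u)\,\p_a\p_b u=H$ (uniformly elliptic near $u_0$, coefficients smooth in $\nabla u$) and iterating Schauder estimates yields $S_k(f)\in C^{\infty}(\ol{\Om})$ whenever $f\in C^{\infty}(\p\Om)$. By uniqueness the $S_k$ coincide on overlaps and assemble into a single map $S$ on a $C^{\infty}$-neighbourhood $B_\delta$ of $f_0$, with $\norm{S(f)-u_0}_{C^{\infty}}\le C\delta$ read off seminorm by seminorm. Since $C^{\infty}(\ol{\Om})$ is the projective limit of the $C^{k,\alpha}(\ol{\Om})$ and each component map $B_\delta\to C^{k+2,\alpha}(\ol{\Om})$ of $S$ is locally the restriction of the analytic $S_k$, the map $S\colon B_\delta\to C^{\infty}(\ol{\Om})$ is $C^{\infty}$ in the Fréchet sense; finally $\Lambda_H=\bigl(u\mapsto\p_\nu u|_{\p\Om}\bigr)\circ S$ is $C^{\infty}$ as $S$ composed with the continuous linear trace-of-normal-derivative map $C^{\infty}(\ol{\Om})\to C^{\infty}(\p\Om)$.

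The single substantive step is the isomorphism property of $D\Phi_k(u_0)$, and it is easy here precisely because $g$ is a genuine Riemannian metric and $L$ carries no zeroth order term, so the Dirichlet problem for $L$ has trivial kernel and no Fredholm obstruction or eigenvalue issue intervenes. The rest is bookkeeping: organising the neighbourhoods and uniqueness radii across $k$ causes no difficulty, since Fréchet smoothness into $C^{\infty}$ is a local notion, $\delta$ can be fixed once and for all at the $C^{2,\alpha}$ level where uniqueness holds, and higher regularity of $S(f)$ is then automatic.
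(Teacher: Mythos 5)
Your proof is correct and takes essentially the same approach the paper has in mind: the paper omits the proof, deferring to \cite[Theorem 2.1]{liimatainenLin2024}, and explicitly records the one nontrivial observation that drives the implicit function theorem argument, namely that the linearized operator $\p_a(g^{ab}\p_b v)$ contains no zeroth order term and hence has trivial Dirichlet kernel. Your setup on the Hölder scale, the energy identity for injectivity of $D\Phi_k(u_0)$, the small-solution uniqueness via the mean-value coefficients $\tilde a^{ab}$, the Schauder bootstrap, and the assembly of the $S_k$ into a Fréchet-smooth $S$ on a $C^{\infty}$-ball is precisely the intended argument.
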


The proof of this is similar to the one in \cite{liimatainenLin2024} and we omit the proof. We note that the linearized equation \eqref{first_lin_vol0} is well-posed (since it does not contain a zeroth order term) and thus we do not need to make assumptions regarding the well-posedness of the linearization of \eqref{PMC}. If $H$ is small enough compared to the isoperimetric constant of $\Omega$ and the mean curvature of the boundary $\p \Omega$, then $u_0$ exists and the solution $u$ corresponding to boundary value $f+f_0$ is globally unique. For these facts, see \cite[Theorem 16.10, and (16.61)]{gilbarg2001elliptic}.

We modify \eqref{first_lin} to a Laplace-Beltrami type equation. 
For $\gamma=\abs{g}^{-1/2}$ we have
\begin{equation*}
	0=\nabla\cdot\left(g^{-1}\nabla(\gamma^{-1/2}v)\right) = \gamma^{-1/2}(\Delta_g + q)v
\end{equation*}
where $q=\dfrac{\Delta_g(\gamma^{1/2})}{\gamma^{1/2}}$. Thus $v$ is a solution of the boundary value problem 
\begin{equation}\label{first_lin_divergence}
	\left\{\begin{array}{ll}
		\nabla\cdot\left(g^{-1}\nabla v\right)=0, & \text{in}\,\, \Om \\
		v=f, & \text{on}\,\, \partial\Om
	\end{array} \right.
\end{equation}
if and only if $\hat{v}=\gamma^{1/2}v$ is a solution to
\begin{equation}\label{first_lin_schrödinger}
	\left\{\begin{array}{ll}
		(\Delta_g + q)\hat{v}=0, & \text{in}\,\, \Om \\
		\hat{v}=\gamma^{1/2}f, & \text{on}\,\, \partial\Om.
	\end{array} \right.
\end{equation}


If the DN maps corresponding to two sources $H$ and  $\tilde H$ agree, it follows by linearization that the DN maps of the equation \eqref{first_lin_divergence} for the coefficients $(g,q)$ and $(\tilde g, \tilde q)$ agree. In this case there exists a conformal diffeomorphism $\phi\colon \ol{\Om}\to\ol{\Om}$ such that
\begin{equation}\label{first_lin_determination}
	g=\lambda\phi^*\tilde g\quad \text{and that}\quad q=\lambda^{-1}\phi^*\tilde q
\end{equation}
with $\lambda|_{\p\Om}=1$ and $\phi|_{\p\Om}= \Id$, see \cite{carstea2024calderonproblemriemanniansurfaces}.

Let us then denote by $v$ and $\tilde v$ the solutions to the first linearizations $(\Delta_{g}+q)v=0$ and  $(\Delta_{\tilde g}+\tilde q)\tilde v=0$ corresponding to a given boundary value $f\in C^{2,\alpha}(\p\Omega)$. 
Now 
\begin{align}\label{solutions_linearized}
	&(\Delta_{g} + q)(\tilde{v}\circ\phi) = (\Delta_{\lambda\phi^*\tilde g} + \lambda^{-1}\phi^*\tilde q)(\tilde{v}\circ\phi) \\\notag
	&= \lambda^{-1}(\Delta_{\phi^*\tilde g} + \phi^*\tilde q)(\tilde{v}\circ\phi) = \lambda^{-1}\phi^*(\Delta_{\tilde g}\tilde v + \tilde q\tilde v) = 0.
\end{align}
Since $\phi|_{\p\Om}=\mathrm{Id}$, we have $v=\tilde{v}\circ\phi$ by the uniqueness of solutions to the Dirichlet problem.

Using the Sylvester determinant rule and \eqref{def_g_0} we have the following
\begin{align*}
	\det(\lambda^{-1}(D\phi)^{-1}\tilde g^{-1}(D\phi)^{-T}) &= \lambda^{-2}J_{\phi}^{-2} \frac{1}{1+\abs{\nabla\tilde u_0}^2|_{\phi}}\left(1-\frac{\abs{\nabla\tilde u_0}^2|_{\phi}}{1+\abs{\nabla\tilde u_0}^2|_{\phi}}\right)\\
	&= \lambda^{-2}J_{\phi}^{-2}\frac{1}{(1+\abs{\nabla\tilde u_0}^2|_{\phi})^2}.
    \end{align*}
On the other hand
    \begin{align*}
	 \det(g^{-1})&=\frac{1}{1+\abs{\nabla u_0}^2}\left(1-\frac{\abs{\nabla u_0}^2}{1+\abs{\nabla u_0}^2}\right)\\
	&=\frac{1}{(1+\abs{\nabla u_0}^2)^2}.
\end{align*}
Thus, the identity
\[
g^{-1}=\lambda^{-1}(D\phi)^{-1}\tilde g^{-1}|_{\phi}(D\phi)^{-T}
\]
gives
\begin{equation*}
	(\lambda J_{\phi})^{-2}=\frac{(1+\abs{\nabla\tilde u_0}^2|_{\phi})^2}{(1+\abs{\nabla u_0}^2)^2},
\end{equation*}
or equivalently
\begin{equation}\label{lambda_times_J}
	\lambda J_{\phi} = \frac{1+\abs{\nabla u_0}^2}{1+\abs{\nabla\tilde u_0}^2|_{\phi}}.
\end{equation}
For the next result we recall that
\begin{equation}\label{eq:formula_for_q}
q=\dfrac{\Delta_g(\gamma^{1/2})}{\gamma^{1/2}} \text{ and } \tilde q=\dfrac{\Delta_{\tilde g}(\tilde \gamma^{1/2})}{\tilde \gamma^{1/2}},
\end{equation}
where
\[
\gamma=\abs{g}^{-1/2} \text{ and }\tilde \gamma=\abs{\tilde g}^{-1/2}.
\]

\begin{lem}\label{J_lambda}
	Let $\Om\subset\R^2$ be a domain with smooth boundary, $g, \tilde g$ be Riemannian metrics on $\Om$, $\phi\colon \Om\to\Om$ a diffeomorphism such that $\phi|_{\p\Om}=\Id$ and $\lambda\colon\Om\to (0,\infty)$ such that $\lambda|_{\p\Om}=1$. Assume that $g=\lambda\phi^*\tilde g$,
	\begin{equation}\label{boundary_eq}
		\det(g^{-1}\tilde g|_\phi)=1\quad\text{to first order on } \p \Omega 
	\end{equation}
	and 
	\begin{equation}\label{q_identity}
		q=\lambda^{-1}\phi^*\tilde q,
	\end{equation}
    where $q$ and $\tilde q$ are as in \eqref{eq:formula_for_q}. 
	Then
	\begin{equation}\label{J_lambda_identity}
		J_{\phi}\lambda=1.
	\end{equation}
\end{lem}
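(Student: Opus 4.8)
My plan is to exploit the explicit formula $q = \Delta_g(\gamma^{1/2})/\gamma^{1/2}$ together with the conformal relation $g=\lambda\phi^*\tilde g$ and see that the identity \eqref{q_identity} forces $\lambda J_\phi$ to be a solution of a homogeneous elliptic equation with zero Cauchy data on $\p\Omega$. First I would compute how $\gamma=|g|^{-1/2}$ transforms: since $g=\lambda\phi^*\tilde g$ in two dimensions, $|g| = \lambda^{2}\, J_\phi^{2}\, |\tilde g|\circ\phi$, hence $\gamma = (\lambda J_\phi)^{-1} (\tilde\gamma\circ\phi)$. Write $\mu := \lambda J_\phi$, so $\gamma^{1/2} = \mu^{-1/2}(\tilde\gamma^{1/2}\circ\phi)$. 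Note that by \eqref{lambda_times_J} we already know $\mu = (1+|\nabla u_0|^2)/(1+|\nabla\tilde u_0|^2|_\phi)$, but the cleaner route is to work abstractly with $\mu$ and only use that $\mu|_{\p\Omega}=1$ (from $\lambda|_{\p\Omega}=1$, $\phi|_{\p\Omega}=\Id$) and that the boundary hypothesis \eqref{boundary_eq} gives control of $\mu$ to first order on $\p\Omega$ — indeed $\det(g^{-1}\tilde g|_\phi) = (\lambda^{-1} J_\phi^{-1})^{2}\cdot(\text{something})$; in 2D $\det(g^{-1}\tilde g|_\phi)$ relates to $\mu$ so that \eqref{boundary_eq} yields $\p_\nu\mu = 0$ on $\p\Omega$.

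The main computation is the conformal transformation law for $q$. Using \eqref{q_identity} and the conformal covariance of the Laplace–Beltrami operator in dimension two, $\Delta_g = \lambda^{-1}\phi^*\Delta_{\tilde g}(\phi^{-1})^*$ (acting on functions), I expand $q = \Delta_g(\gamma^{1/2})/\gamma^{1/2}$ with $\gamma^{1/2}=\mu^{-1/2}(\tilde\gamma^{1/2}\circ\phi)$ by the Leibniz rule for $\Delta_g$ applied to the product $\mu^{-1/2}\cdot(\tilde\gamma^{1/2}\circ\phi)$. On the other hand $\lambda^{-1}\phi^*\tilde q = \lambda^{-1}\,(\Delta_{\tilde g}\tilde\gamma^{1/2})/\tilde\gamma^{1/2}\big|_\phi = \mu^{-1}J_\phi \,\Delta_g(\tilde\gamma^{1/2}\circ\phi)/(\tilde\gamma^{1/2}\circ\phi)$ after converting via the covariance law. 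Subtracting, all the terms involving second derivatives of $\tilde\gamma^{1/2}\circ\phi$ should cancel up to the factor $\mu$, and one is left with an equation of the form
\begin{equation*}
\Delta_g w + \langle b, \nabla w\rangle_g = 0 \quad\text{in }\Omega,
\end{equation*}
where $w$ is $\mu^{-1/2}$ (or $\log\mu$, or $\mu^{1/2}-1$, whichever linearizes the algebra most cleanly) and $b$ is a smooth vector field built from $\nabla(\tilde\gamma^{1/2}\circ\phi)$; crucially there is no zeroth-order term, so the equation has the unique continuation property and admits only the trivial solution with vanishing Cauchy data.

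Then I would finish by the boundary conditions: $\mu|_{\p\Omega}=1$ gives $w|_{\p\Omega}$ equal to the corresponding constant, and the first-order boundary hypothesis \eqref{boundary_eq} gives $\p_\nu w|_{\p\Omega}=0$. By unique continuation from the boundary for the homogeneous second-order elliptic equation above (no zeroth order term, so constants are not forced to be zero, but a solution with zero Dirichlet \emph{and} Neumann data must vanish — equivalently subtract the constant and apply Holmgren/unique continuation), we get $w\equiv\text{const}$, hence $\mu\equiv 1$, i.e. $J_\phi\lambda = 1$ in $\Omega$. The step I expect to be the genuine obstacle is the algebraic cancellation in the second paragraph: one must verify that after substituting $\gamma^{1/2} = \mu^{-1/2}(\tilde\gamma^{1/2}\circ\phi)$ into both sides of \eqref{q_identity} and using the precise 2D conformal covariance of $\Delta_g$, every term carrying derivatives of $\tilde\gamma$ cancels, leaving a closed equation purely for $\mu$. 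This requires care with the placement of the factor $\mu$ and with the cross term $\langle\nabla\mu^{-1/2},\nabla(\tilde\gamma^{1/2}\circ\phi)\rangle_g$; once that is in hand, the unique continuation argument is standard.
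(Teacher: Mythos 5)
Your plan is correct, and the algebraic cancellation you flagged as the potential obstacle does in fact go through cleanly. Set $D := \gamma^{1/2} = |g|^{-1/4}$, $\tilde D := (\tilde\gamma^{1/2})\circ\phi = |\tilde g|^{-1/4}\circ\phi$, and $w := D/\tilde D = \mu^{-1/2}$ with $\mu := \lambda J_\phi$. The conformal covariance of $\Delta_g$ in two dimensions (namely $\Delta_{cg} = c^{-1}\Delta_g$, applied with $c=\lambda$ to $\phi^*\tilde g = \lambda^{-1}g$) gives
\begin{equation*}
\lambda^{-1}\phi^*\tilde q \;=\; \frac{\Delta_g\tilde D}{\tilde D},
\end{equation*}
with no extra prefactor. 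Your intermediate formula carries a spurious factor $\mu^{-1}J_\phi = \lambda^{-1}$, and with that factor the $\Delta_g\tilde D$ terms do not cancel — which is presumably the source of your hesitation. Once the extra factor is removed, the hypothesis $q = \lambda^{-1}\phi^*\tilde q$ becomes $\Delta_g D/D = \Delta_g\tilde D/\tilde D$, and expanding $\Delta_g D = \Delta_g(w\tilde D)$ by the Leibniz rule yields exactly
\begin{equation*}
\Delta_g w + \frac{2}{\tilde D}\, g\bigl(\nabla w, \nabla\tilde D\bigr) = 0,
\end{equation*}
a homogeneous second-order elliptic equation with no zeroth-order term, as you predicted.

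One small misattribution in the boundary data: $\mu|_{\p\Omega}=1$ does not follow from $\lambda|_{\p\Omega}=1$ together with $\phi|_{\p\Omega}=\Id$ alone, because those only fix the tangential part of $D\phi$ on $\p\Omega$; the normal derivative, and hence $J_\phi|_{\p\Omega}$, remains a priori free. What saves you is that in two dimensions one computes directly $\det(g^{-1}\tilde g|_\phi) = \lambda^{-2}J_\phi^{-2} = \mu^{-2}$, so the hypothesis \eqref{boundary_eq} by itself forces both $\mu=1$ and $\p_\nu\mu = 0$ on $\p\Omega$. Thus $w - 1$ has vanishing Cauchy data, satisfies the same homogeneous equation (constants are in the kernel), and unique continuation gives $w\equiv 1$, i.e.\ $\lambda J_\phi \equiv 1$.

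The paper takes a closely related but distinct route: from the same identity $\Delta_g D/D = \Delta_g\tilde D/\tilde D$ it rearranges in terms of the difference $D-\tilde D$ rather than the ratio, obtaining the pointwise inequality $|\Delta_g(D-\tilde D)| \le C\,|D-\tilde D|$, and then invokes unique continuation for this Carleman-type differential inequality. Your ratio variable gives instead a genuine elliptic PDE with no zeroth-order term, which is arguably tidier and avoids introducing the constant $C$ (whose finiteness uses $D,\tilde D$ bounded away from zero), at the modest cost of a product-rule expansion. Both versions rest on the same key inputs — the formula for $q$, conformal covariance of $\Delta_g$ in 2D, and unique continuation from boundary Cauchy data supplied by \eqref{boundary_eq} — so the difference is one of packaging rather than of essence.
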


\begin{proof}
	By using the conformal invariance of the Laplacian, $\Delta_{cg} v=c^{-1}\Delta_gv$,  together with \eqref{q_identity} and $g=\lambda\phi^*\tilde g$ we have that
	\begin{align*}
		q=\abs{g}^{1/4}\Delta_g(\abs{g}^{-1/4}) &=\lambda^{-1}\phi^*\tilde q = \lambda^{-1}\phi^*\left(\abs{\tilde g}^{1/4}\Delta_{\tilde g}(\abs{\tilde g}^{-1/4})\right)\\
        &=\lambda^{-1}\abs{\tilde g}^{1/4}|_{\phi}\Delta_{\phi^*\tilde g}(\abs{\tilde g}^{-1/4}|_{\phi})\\
		&= \lambda^{-1}\abs{\tilde g}^{1/4}|_{\phi}\Delta_{\lambda^{-1}g}(\abs{\tilde g}^{-1/4}|_{\phi})\\
		&= \abs{\tilde g}^{1/4}|_{\phi}\Delta_{g}(\abs{\tilde g}^{-1/4}|_{\phi}).
	\end{align*}
	Let us denote $D=\abs{g}^{-1/4}$ and $\tilde D=\abs{\tilde g}^{1/4}|_{\phi}$. Then the previous equality becomes
	\begin{align*}
		0=\frac{\Delta_gD}{D} - \frac{\Delta_g\tilde D}{\tilde D} = \frac{1}{\tilde D}\Delta_g(D-\tilde D) + \Delta_gD\left(\frac{1}{D} - \frac{1}{\tilde D}\right).
	\end{align*}
	This implies that
	\begin{align*}
		\abs{\Delta_g(D-\tilde D)}=\abs{\tilde D \Delta_gD}\abs{\frac{D-\tilde D}{D\tilde D}} \leq C \abs{D-\tilde D}
	\end{align*}
	for some positive constant $C$. By \eqref{boundary_eq}, we have that $D$ and $\tilde D$ agree to first order on $\p\Omega$. It then follows by the standard unique continuation for the Laplace equation (see e.g \cite[Theorem B.1.]{Kenig2011}) that
	\begin{equation*}
		D=\tilde D\ \text{ or equivalently } \ \abs{g}=\abs{\tilde g}|_{\phi}.
	\end{equation*}
	By the assumption $g=\lambda\phi^*\tilde g$ and $\abs{g}=\abs{\tilde g}|_{\phi}$, we obtain 
	\begin{equation*}
		\abs{g} = \abs{\lambda\phi^*\tilde g} = \lambda^2J_{\phi}^2\abs{\tilde g}|_{\phi} = \lambda^2J_{\phi}^2\abs{g}
	\end{equation*}
	which then implies \eqref{J_lambda_identity}.
\end{proof}

Now from \eqref{J_lambda_identity} and \eqref{lambda_times_J} we get
\begin{equation}\label{norm_u_tilde_u}
	\abs{\nabla u_0}=\abs{\nabla\tilde u_0}|_{\phi}.
\end{equation}

\begin{rmk}

We note that prescribed Jacobian equations such as \eqref{J_lambda_identity} may lack unique solutions \cite[Section 1.4]{csato2011pullback}. To resolve this ambiguity, we proceed to the second linearization in Section \ref{sec_second_lin}, which yields additional constraints on $\phi$. Ultimately, these constraints force $\phi$ to be a conformal mapping, whose uniqueness then follows from Liouville-type rigidity.

\end{rmk}

\section{Boundary information}\label{sec_boundary}

Before moving to the second linearization in this section we discuss the needed information on the boundary. As seen in the previous section, the metric $g$ introduced in \eqref{def_g_0} depends on the initial solution $u_0$. Thus in order to have knowledge of $g$ on the boundary, we need to have knowledge of $u_0$ on the boundary.

First of all, since we know $u_0$ and $\p_{\nu}u_0$ on the boundary, we know $\nabla u_0$ on the boundary. Thus we know $g$ on the boundary to zeroth order.

In Section \ref{sec_T=0} we will need that the conformal diffeomorphism $\phi$ from the previous section is the identity to high order on the boundary which follows from \cite[Lemma 5.1]{carstea2023inverseproblemgeneralminimal}. We only need to check that $\phi$ satisfies the properties required by that Lemma. From \eqref{solutions_linearized} we know that $\phi$ is a morphism of solutions. We also know that $\Lambda_H = \Lambda_{\tilde H}$ and combining this with the fact that $g, \tilde{g}$ agree on the boundary further implies that the DN maps for the first linearizations agree.

Thus in order to use \cite[Lemma 5.1]{carstea2023inverseproblemgeneralminimal} we need to have that $g, \tilde g$ agree on the boundary in all orders. This follows, if we can show that $u_0$ and $\tilde u_0$ agree on the boundary in all orders.

We know $H|_{\p\Om}=\tilde H|_{\p\Om}$ and thus 
\begin{equation}\label{eq_boundary_H}
	\nabla\cdot\left(\frac{\nabla u_0}{(1+\abs{\nabla u_0}^2)^{1/2}}\right)\bigg|_{\p\Om} = 	\nabla\cdot\left(\frac{\nabla \tilde u_0}{(1+\abs{\nabla \tilde u_0}^2)^{1/2}}\right)\bigg|_{\p\Om}.
\end{equation}
Now
\begin{align*}
	\nabla\cdot\left(\frac{\nabla u_0}{(1+\abs{\nabla u_0}^2)^{1/2}}\right) = \frac{\Delta u_0}{(1+\abs{\nabla u_0}^2)^{1/2}} - \frac{\nabla^2u_0(\nabla u_0,\nabla u_0)}{(1+\abs{\nabla u_0}^2)^{3/2}}
\end{align*}
which can be divided into terms containing tangential derivatives of $u_0$, which we already know on $\p\Om$, and terms containing only normal derivatives. For the moment let us assume that the normal direction is in the direction $x_2$. The ones with only normal derivatives are
\begin{align*}
	\frac{\p^2_2u_0}{(1+\abs{\nabla u_0}^2)^{1/2}} - \frac{\p^2_2 u_0\p_2u_0\p_2u_0}{(1+\abs{\nabla u_0}^2)^{3/2}} = \p^2_2u_0\left(\frac{1}{(1+\abs{\nabla u_0}^2)^{1/2}} - \frac{(\p_2u_0)^2}{(1+\abs{\nabla u_0}^2)^{3/2}}\right).
\end{align*}
Then from \eqref{eq_boundary_H} it follows
\begin{equation*}
    \frac{\p^2_2u_0}{(1+\abs{\nabla u_0}^2)^{1/2}}\left(1-\frac{(\p_2u_0)^2}{1+\abs{\nabla u_0}^2}\right) = \frac{\p^2_2\tilde u_0}{(1+\abs{\nabla \tilde u_0}^2)^{1/2}}\left(1-\frac{(\p_2\tilde u_0)^2}{1+\abs{\nabla \tilde u_0}^2}\right)
\end{equation*}
and using $\nabla u_0=\nabla\tilde u_0$ on $\p\Om$ implies
\begin{equation*}
    \left(\p^2_2u_0 - \p^2_2\tilde u_0\right)\left(1-\frac{(\p_2u_2)^2}{1+\abs{\nabla u_0}^2}\right) = 0.
\end{equation*}
This is true when $\p^2_2u_0 = \p^2_2\tilde u_0$ since otherwise we would have $(\p_2u_0)^2=-1$.
Thus from $H|_{\p\Om}=\tilde H|_{\p\Om}$ it follows $\nabla^2 u_0|_{\p\Om} = \nabla^2\tilde u_0|_{\p\Om}$.

For higher order derivatives of $u_0$ we again only need to focus on terms with only higher order normal derivatives. Since we know $\p_2H|_{\p\Om}=\p_2\tilde H|_{\p\Om}$, we compute $\p_2 H$:
\begin{align*}
	\p_2H &= \frac{\p_2\Delta u_0}{(1+\abs{\nabla u_0}^2)^{1/2}} \\
	&- (1+\abs{\nabla u_0}^2)^{-3/2}\left(\p^3_{211}u_0(\p_1 u_0)^2 + \p^3_{212}u_0\p_1u_0\p_2u_0 + \p^2_{221}u_0\p_1u_0\p_2u_0 + \p^3_2u_0(\p_2u_0)^2\right)\\
	&+ R(\nabla^2u_0,\nabla u_0)
\end{align*}
where the term $R$ does not include third order derivatives. Again we are interested in the term with only $\p_2$ derivatives in them:
\begin{align*}
	\p^3_2u_0\left(\frac{1}{(1+\abs{\nabla u_0}^2)^{1/2}} - \frac{(\p_2u_0)^2}{(1+\abs{\nabla u_0}^2)^{3/2}}\right).
\end{align*}
This is similar to situation above and thus we would have $\p^3_2u_0 = \p^3_2\tilde u_0$.
Hence we get that the third order derivatives of $u_0$ and $\tilde u_0$ agree on $\p\Om$. Proceeding inductively we obtain that $u_0$ and $\tilde u_0$ agree to high order on $\p\Om$.

Thus we can use \cite[Lemma 5.1]{carstea2023inverseproblemgeneralminimal} to say that $\phi$ is the identity on $\p\Om$ to infinite order. This together with \eqref{lambda_times_J} implies that $\lambda$ is equal to one on $\p\Om$ to infinite order.

\section{Second linearization}\label{sec_second_lin}

Using the same notation as in Section \ref{sec_first_lin}, we calculate the second linearization of \eqref{PMC}. Now the second linearization is
\begin{equation}\label{second_lin_F}
	\p_a\big(\p_{p_cp_b}F^a(\nabla u_0)\p_cv_1\p_bv_2 + \p_{p_b}F^a(\nabla u_0)\p_bw\big)=0,
\end{equation}
where $w=\p_{\e_1\e_2}u|_{\e=0}$. Next we calculate $\p_{p_cp_b}F^a(p)$:
\begin{align*}
	\p_{p_cp_b}F^a(p) &= -\frac{p_c}{(1+\abs{p}^2)^{3/2}}\left(\delta_{ab}-\frac{1}{1+\abs{p}^2}p_ap_b\right) \\
	&+ \frac{1}{(1+\abs{p}^2)^{1/2}}\left(\frac{2p_ap_bp_c}{(1+\abs{p}^2)^2} - \frac{\delta_{ac}p_b + \delta_{bc}p_a}{1+\abs{p}^2}\right)\\
	&= -\frac{p_c}{(1+\abs{p}^2)^{3/2}}\left(\delta_{ab}-\frac{1}{1+\abs{p}^2}p_ap_b\right) \\
	&+ \frac{1}{(1+\abs{p}^2)^{3/2}}\left(-p_b\left(\delta_{ac} - \frac{p_ap_c}{1+\abs{p}^2}\right) - p_a\left(\delta_{bc} - \frac{p_bp_c}{1+\abs{p}^2}\right)\right).
\end{align*}
Using \eqref{first_deriv_F} and \eqref{def_g_0} gives
\begin{align*}
	\p_{p_cp_b}F^a(\nabla u_0)=-\frac{1}{1+\abs{\nabla u_0}^2}(\p_au_0g^{bc} + \p_bu_0 g^{ac} + \p_cu_0 g^{ab}).
\end{align*}
Define, again with indices up,
\begin{equation*}
	C^{abc}:=\p_{p_cp_b}F^a(\nabla u_0).
\end{equation*}
Notice that $\p_{p_cp_b}F^a(p)$ is symmetric.

\subsection{Integral identity}

Here we derive an integral identity for the second linearization. Integrating \eqref{second_lin_F} against a solution $v_0$ of the first linearization to obtain
\begin{align*}
	0=&\int_{\Om}\p_a\big(C^{abc}\p_cv_1\p_bv_2 + g_0^{ab}\p_bw\big) v_0\,dx\\
	=& -\int_{\Om} C^{abc}\p_cv_1\p_bv_2\p_av_0\,dx - \int_{\Om}g_0^{ab}\p_bw\p_av_0\,dx \\
	&+ \int_{\p\Om} \big(C^{abc}\p_cv_1\p_bv_2 + g_0^{ab}\p_bw\big) v_0\nu_a\,dS\\
	=& -\int_{\Om} C^{abc}\p_cv_1\p_bv_2\p_a v_0\,dx\\
	&+ \int_{\p\Om} \big(C^{abc}\p_cv_1\p_bv_2 + g_0^{ab}\p_bw\big) v_0\nu_a - wg_0^{ab}\p_a v_0\nu_b\,dS\\
	&= -I + B.
\end{align*}
For the moment let us only focus on $I$:
\begin{align*}
	I&=\int_{\Om} -\frac{1}{1+\abs{\nabla u_0}^2}(\p_au_0g^{bc} + \p_bu_0 g^{ac} + \p_cu_0 g^{ab}) \p_cv_1\p_bv_2\p_a v_0\,dx\\
	&= \int_{\Om} -\frac{1}{1+\abs{\nabla u_0}^2}(\p_au_0\p_a v_0g(\nabla v_1,\nabla v_2) + \p_bu_0\p_bv_2g(\nabla v_0,\nabla v_1) + \p_cu_0\p_c v_1g(\nabla v_0,\nabla v_2))\,dx\\
	&= \int_{\Om} -\frac{\nabla u_0}{1+\abs{\nabla u_0}^2}\cdot(\nabla v_0g(\nabla v_1,\nabla v_2) + \nabla v_1g(\nabla v_0,\nabla v_2) + \nabla v_2g(\nabla v_0,\nabla v_1))\, dx.
\end{align*}
Next consider two sources $H, \tilde H$ as in the previous section. Now by making a change of coordinates with $\phi$
\begin{align*}
	\tilde I &= \int_{\Om} -\frac{1}{1+\abs{\nabla\tilde u_0}^2}(\p_a\tilde u_0\tilde g^{bc} + \p_b\tilde u_0 \tilde g^{ac} + \p_c\tilde u_0 \tilde g^{ab}) \p_c\tilde v_1\p_b\tilde v_2\p_a \tilde v_0\,dx\\
	&= \int_{\Om} -\frac{1}{1+\abs{\nabla \tilde u_0}^2|_{\phi}}((\p_a\tilde u_0)|_{\phi}\tilde g^{bc}|_{\phi} + (\p_b\tilde u_0)|_{\phi} \tilde g^{ac}|_{\phi} + (\p_c\tilde u_0)|_{\phi} \tilde g^{ab}|_{\phi})\\
	&\times [D\phi^{-1}]^{c'}_{c}[D\phi^{-1}]^{b'}_{b}[D\phi^{-1}]^{a'}_{a}\p_{c'}v_1\p_{b'}v_2\p_{a'}v_0J_{\phi}\,dx\\
	&= \int_{\Om} -\frac{\lambda J_{\phi}}{1+\abs{\nabla \tilde u_0}^2|_{\phi}} ((\phi^*\nabla\tilde u_0)^{a'} g^{b'c'} + (\phi^*\nabla\tilde u_0)^{b'} g^{a'c'} + (\phi^*\nabla\tilde u_0)^{c'} g^{a'b'})\\
	&\times \p_{c'}v_1\p_{b'}v_2\p_{a'}v_0\,dx\\
	&= \int_{\Om} -\frac{\lambda J_{\phi} \phi^*\nabla \tilde u_0}{1+\abs{\nabla \tilde u_0}^2|_{\phi}}\cdot(\nabla v_0g(\nabla v_1,\nabla v_2) + \nabla v_1g(\nabla v_0,\nabla v_2) + \nabla v_2g(\nabla v_0,\nabla v_1))\, dx,
\end{align*}
where we used \eqref{first_lin_determination} for $g^{-1}, \tilde g^{-1}$ i.e. $g^{-1}=\lambda^{-1}(D\phi)^{-1}\tilde g^{-1}(D\phi)^{-T}$. Let 
\begin{equation}\label{T_def}
	T:=\left(\frac{\lambda J_{\phi} \phi^*\nabla \tilde u_0}{1+\abs{\nabla \tilde u_0}^2|_{\phi}} - \frac{\nabla u_0}{1+\abs{\nabla u_0}^2}\right)\abs{g}^{-1/2}.
\end{equation}
Then
\begin{align}\label{int_identity}
	I-\tilde I &= \int_{\Om} T\cdot(\nabla v_0g(\nabla v_1,\nabla v_2) + \nabla v_1g(\nabla v_0,\nabla v_2) + \nabla v_2g(\nabla v_0,\nabla v_1))\, dV_g \\\notag
	&= \int_{\Om} T\cdot \bigg(\nabla v_0g(\nabla v_1,\nabla v_2) + \nabla v_1\left(\Delta_g(v_0v_2) - v_2\Delta_gv_0 - v_0\Delta_gv_2\right) \\\notag
	& + \nabla v_2\left(\Delta_g(v_1v_0) - v_0\Delta_gv_1 - v_1\Delta_gv_0\right)\bigg)\, dV_g.
\end{align}

Now if we can show that $T=0$ which further implies, using \eqref{lambda_times_J} and \eqref{norm_u_tilde_u},
\begin{equation}\label{relation_u_tilde_u_3}
	\phi^*\nabla\tilde u_0=\nabla u_0.
\end{equation}

\subsection{Showing that $\phi$ is the identity}\label{sub_sec_phi_identity}

From the first linearization we know that $\lambda D\phi^{-1}\tilde g^{-1}|_{\phi}D\phi^{-T}=g^{-1}$ or equivalently (using the explicit formula \eqref{def_g_0} for $g$ and $\tilde g$)
\begin{align*}
	\lambda D\phi^{-1}D\phi^{-T} - \frac{D\phi^{-1}(\nabla \tilde u_0\otimes\nabla \tilde u_0)|_{\phi}D\phi^{-T}}{1+\abs{\nabla\tilde u_0}^2|_{\phi}} = I - \frac{\nabla u_0\otimes \nabla u_0}{1+\abs{\nabla u_0}^2}.
\end{align*}
Here we used \eqref{norm_u_tilde_u}. Using the above together with \eqref{relation_u_tilde_u_3} and \eqref{norm_u_tilde_u} implies
\begin{align*}
	\lambda D\phi^{-1}D\phi^{-T} = I \quad \text{or}\quad D\phi^{-1}D\phi^{-T}=\lambda^{-1}I.
\end{align*}
That is $\phi$ is a conformal mapping in Euclidean space. 
Combining this with $\phi|_{\p\Om}=\mathrm{Id}$ a Liouville type result (see e.g. \cite[Proposition 3.3.]{Lionheart_1997}) gives that $\phi$ is the identity in $\Om$.

\subsection{Boundary terms}

When deriving the integral identity above we encountered the following integral on the boundary
\begin{equation*}
	B = \int_{\p\Om} \big(C^{abc}\p_cv_1\p_bv_2 + g_0^{ab}\p_bw\big) v_0\nu_a - wg_0^{ab}\p_a v_0\nu_b\,dS.
\end{equation*}
We know the boundary values of $u_0$ and $\p_{\nu}u_0$, hence we know $\nabla u_0$ on the boundary. Thus by the definition of $g$ we know $g$ on the boundary. This further implies that we know $C$ on the boundary. Since we know that the DN map is smooth and we know the DN map related to \eqref{PMC}, we thus know the DN map of the linearized equation. This also uses that $g$ is known on the boundary.

Using these and that $\phi$ is the identity on the boundary to high order (this follows from \cite[Lemma 6.1]{carstea2024calderonproblemriemanniansurfaces}, see Section \ref{sec_boundary})
we have that $\lambda$ is equal to $1$ on the boundary. Putting all of the above together we can conclude that
\begin{equation*}
	B-\tilde B = 0.
\end{equation*}

\section{Complex geometrical optics solutions}\label{sec:CGOs}
In this section, we construct
	CGO solutions for the first linearized equation \eqref{first_lin_divergence} $\nabla\cdot\left(g^{-1}\nabla v\right)=0$. As already noted in \eqref{first_lin_schrödinger} it is sufficient to construct solutions to the Shr\"odinger equation
\begin{equation*}
		(\Delta_g + q) \mathbf{v}=0  \text{ in } \Om
\end{equation*}
and then redefine $v=\gamma^{-1/2}\mathbf{v}$. 
The construction is based on \cite{guillarmou2011identification, guillarmou2011calderon} and the estimates we recall also borrows from \cite{carstea2024calderonproblemriemanniansurfaces,liimatainen2023calder}. We accomplish the construction on Riemannian surfaces $(\Sigma,g)$.  While we do this, we explain the corresponding definitions and concepts in $\C$. When we apply the CGOs, we will pass to global isothermal coordinates. The reader can think that $\Sigma=\Om$ and that the metric is conformally Euclidiean $g=cI_{2\times 2}$. We extend $(\Sigma,g)$ to a slightly larger Riemannian surface $(\widetilde \Sigma,g)$ , and $q$ onto it so that $q\in C_c^\infty(\widetilde \Sigma)$. (We denote the extensions of $g$ and $q$ using the same letters.) 

Let us then recall the standard holomorphic calculus on Riemannian surfaces. The complexified cotangent bundle $\C T^*\widetilde M$ has the splitting
\[
 \C T^*\widetilde M= T^*_{1,0}\widetilde M \oplus T^*_{0,1}\widetilde M
\]
determined by the eigenspaces of the Hodge star operator $\star$.  In local holomorphic coordinate $z$ the space $T^*_{1,0}\widetilde \Sigma$ is spanned by $dz$ and $T^*_{0,1}\widetilde \Sigma$ is spanned by $d\ol z$.  The invariant definitions of $\op$ and $\p$ operators are $\pi_{0,1}d$  and  $\pi_{1,0}d$ respectively.
	In holomorphic coordinates $z=(x,y)$ these operators are 
	\[
	 \op=\frac 12 (\p_x+i\p_y) \ \text{ and } \ \p=\frac 12  (\p_x-i\p_y). 
	\]
	By \cite[Proposition 2.1]{guillarmou2011identification} there is a right inverse $\op^{-1}$ for $\op$ in the sense that
	\[
	\op\s \s  \op^{-1}\omega=\omega \text{ for all } \omega\in C_0^\infty(\widetilde \Sigma,T_{1,0}^*\widetilde \Sigma)
	\]
	such that $\op^{-1}$ is bounded from $L^p(T_{1,0}^*\widetilde \Sigma)$ to $W^{1,p}(\widetilde \Sigma)$ for any $p\in (1,\infty)$. We have analogous properties for 
	\[
	\op^*=-\mathsf{i}\star \p: W^{1,p}(T_{0,1}^*\widetilde \Sigma)\to L^p(\widetilde \Sigma),
	\]
	which is the Hermitean adjoint of $\op$. In holomorphic coordinates  the operator  $\op^*$  is just  $\p$. If $\Sigma$ admits global holomorphic coordinates, then in those coordinates  $\op^{-1}$ and $\op^{*-1}$ equal
	\[
	 (\overline{\partial}^{-1} \mathbf{u})(z)=\int_{\C} \frac{\mathbf{u}(w)}{z-w} \mathrm{~d} w \ \text{ and } \  ({\partial}^{-1} \mathbf{u})(z)=\int_{\C}  \frac{\mathbf{u}(w)}{\overline z-\overline w} \mathrm{~d} w
	\]
	with the understanding that the function $\mathbf{u}\in L^p(\Sigma)$ in the coordinates are continued onto $\C$ in $L^p$ (and similarly if $\mathbf{u}$ has higher regularity). We also define 
	\[
	\op_\psi^{-1}:=\mathcal{R}\op^{-1}e^{-2i\psi/h}\mathcal{E} \quad \text{and}\quad  \op_\psi^{*-1}:=\mathcal{R}\op^{*-1}e^{2i\psi/h}\mathcal{E},
	\]
	where $\mathcal{E} : W^{l,p}(\Sigma) \to W_c^{l,p}(\widetilde \Sigma)$ is an extension operator from functions on $\Sigma$ to functions on the extended surface $\widetilde \Sigma$ and $\mathcal R$ is the restriction operator back to $\Sigma$. By \cite[Lemma 2.2 and Lemma 2.3]{guillarmou2011identification} we have for $p>2$ and $2\leq q\leq p$ the following estimate 
\begin{align}\label{eq:sobo_decay}
\begin{split}
  \norm{\overline \p_\psi^{-1}f}_{L^q(M)}&\leq C h^{1/q}\norm{f}_{W^{1,p}(M, T_{0,1}^*M)} 
 \end{split}
\end{align}
Moreover, there is $\eps>0$ such that 
\begin{align}\label{eq:sobo_decayL2}
\begin{split}
 \norm{\overline \p_\psi^{-1}f}_{L^2(M)}&\leq C h^{1/2+\eps}\norm{f}_{W^{1,p}(M, T_{0,1}^*M)} 
 \end{split}
\end{align}
We have similar estimates for $\op_\psi^{*-1}f$.

The CGOs on a Riemannian surface $(\Sigma,g)$ for the equation $(\Delta_g + q)\mathbf{v}=0$ have the form
\begin{equation}\label{eq:CGO_def}
			\mathbf{v}=e^{\Phi/h}(a+r_h).
\end{equation}
Here $\Phi$ 
 is holomorphic function, which is Morse (i.e. Hessian at possible critical points of $\Phi$ is invertible). Here also $a$ is a holomorphic function and $r=r_h$ is a correction term. The precise form of $r_h$ is 
\begin{equation}\label{eq:rh_form}
 r_h=-  \overline{\p}_\psi^{-1}\sum_{j=0}^\infty T_h^j\op_\psi^{*-1}(qa),
\end{equation}
where the operators $\overline{\p}_\psi^{-1}$ and $\op_\psi^{*-1}$ were defined above and $T_h$ is defined as
\begin{eqnarray}\label{def: Th}
 T_h:=-\op_\psi^{*-1}q\overline\p_\psi^{-1}.
\end{eqnarray}
We also denote
\begin{equation}\label{eq:sh_formula}
s_h=\sum_{j=0}^\infty T_h^j\op_\psi^{*-1}(qa)
\end{equation}
so that $r_h=-\overline{\p}_\psi^{-1}s_h$.

By taking complex conjugate and changing the sign of the phase function, we also have CGO solutions with antiholomorphic phase of the form
\begin{equation}\label{eq:CGO_def_antiholom}
			\mathbf{v}=e^{-\overline \Phi/h}(b+\tilde r_h),
\end{equation}
where $\overline \Phi$ is antiholomorphic Morse function, $b$ antiholomorphic and $\tilde r_h$ given by:
\begin{equation}\label{eq:tilde_rh_formula}
 \tilde r_h=-\p_\psi^{-1}\sum_{j=0}^\infty \tilde T_h^j(\p_{\psi}^{*-1} (q a))=-\p_\psi^{-1}\tilde s_h.
\end{equation}

Let us then recall estimates from \cite{guillarmou2011identification, guillarmou2011calderon, carstea2024calderonproblemriemanniansurfaces,liimatainen2023calder}. If $\Phi$ has a critical point, then  
\begin{equation}\label{eq:CZ_rh_norms}
||r_h||_{L^p}, ||s_h||_{L^p},   ||\nabla r_h||_{L^p}=O(h^{1/p+\epsilon}),
\end{equation}
for any $p\in[2,\infty)$ and $0<\eps\leq \epsilon_p$ depending on $p$. Moreover, we have
\begin{eqnarray}
\label{Th norm estimate}
 \norm{T_h}_{L^r\to L^r}=O(h^{1/r}) \text{ and } \norm{T_h}_{L^2\to L^2}=O(h^{1/2-\eps}), 
\end{eqnarray}
for any $0<\eps<1/2$ and $r>2$. 
See  \cite[Section 4.1]{carstea2024calderonproblemriemanniansurfaces} for the above. If
 $f\in C^\infty(\Sigma)$ vanishes to order $1$ on $\p \Sigma$ and $\hat \psi $ is the imaginary part of holomorphic Morse function on $\Sigma$, then 
\begin{equation}\label{eq_integrate_by_parts_doo}
  \int_\Sigma e^{i\hat \psi/h}f r_h=-\int_\Sigma e^{i\hat \psi/h}f \overline{\p}_\psi^{-1}s_h=\int_\Sigma  \op^{-1}(e^{i\hat \psi/h}f)e^{-2i\psi/h} s_h=O(h^{1+\eps})
 \end{equation} 
 by \eqref{eq:sobo_decayL2}. 
 Here $\op^{-1}(e^{i\hat \psi/h}f)$ means the Cauchy-Riemann operator $\op^{-1}$ applied to the zero extension of $f$ onto $\widetilde \Sigma$, which is $C^1(\widetilde\Sigma)\subset W^{1,p}(\widetilde\Sigma)$ for all $p>1$. The same estimate holds when $r_h$ is replaced by $\tilde r_h$.

If $\Phi$ has no critical points, we have the better estimates
\begin{eqnarray}
\label{eq: no critical point estimate}
\|r_h\|_p +\|s_h\|_p+ \|d r_h\|_p \leq Ch
\end{eqnarray}
for all $p\in (1,\infty)$. Also, in this case we have the expansion
\begin{eqnarray}\label{eq: no critical point expansion II} \op_\psi^{-1} f =e^{-2i\psi/h} \frac{ih}{2} \frac{f}{\bar\partial\psi} + \frac{ih}{2}\op^{-1}\left( e^{-2i\psi/h}\bar\partial\left(\frac{f}{\bar\partial\psi}\right)\right),
\end{eqnarray} 
and the $L^2$ norm second order derivatives of $r_h$ multiplied by any $H\in C_0^{\infty}(\widetilde \Sigma)$ satisfy
\begin{equation}\label{eq_cald_zygm}
||H\nabla^2  r_{h}||_{L^2} =O(1)
\end{equation}
see \cite[Section 4.1]{carstea2024calderonproblemriemanniansurfaces} and \cite[Eq. (3.14)]{liimatainen2023calder}. 

We have similar estimates  for $\tilde r_h$, $\tilde s_h$ and $\tilde T_h$, and expansion for $\op_\psi^{*-1}$, to the ones above.

\section{Proving that $T=0$}\label{sec_T=0}

Here we assume that $T$ vanishes to high order on the boundary $\p\Om$. The integral identity \eqref{int_identity} holds for arbitrary solutions of the first linearization \eqref{first_lin_divergence}. We will construct these to be of the form $v=\gamma^{-1/2}\hat{v}$, $\gamma=\abs{g}^{-1/2}$, where $\hat{v}$ is a solution of \eqref{first_lin_schrödinger}. Thus we can modify by noting that for these solutions 
\begin{align*}
	\Delta_g v_k &= \Delta_g(\gamma^{-1/2}\hat{v}_k) = \Delta_g(\gamma^{-1/2})\hat{v}_k + 2g\left(\nabla(\gamma^{-1/2}),\nabla \hat{v}_k\right) + \gamma^{-1/2}\Delta_g\hat{v}_k\\
	&= \Delta_g(\gamma^{-1/2})\hat{v}_k + 2g\left(\nabla(\gamma^{-1/2}),\nabla \hat{v}_k\right) - \gamma^{-1/2}q\hat{v}_k\\
	&= 2g\left(\nabla(\gamma^{-1/2}),\nabla \hat{v}_k\right) + \left(\Delta_g(\gamma^{-1/2}) - \gamma^{-1/2}q\right)\hat{v}_k,
\end{align*}
where $q=\abs{g}^{1/4}\Delta_g(\abs{g}^{-1/4})$:
\begin{align*}
	I-\tilde I &= \sum_{k=1}^{2}\int_{\Om}T^k\Big( \p_kv_0g(\nabla v_1, \nabla v_2) \\
	&+ \p_kv_1(\Delta_g(v_0v_2) - \gamma^{-1/2}\hat{v}_2\Delta_g(\gamma^{-1/2}\hat{v}_0) - \gamma^{-1/2}\hat{v}_0\Delta_g(\gamma^{-1/2}\hat{v}_2))\\
	&+ \p_kv_2(\Delta_g(v_0v_1) - \gamma^{-1/2}\hat{v}_0\Delta_g(\gamma^{-1/2}\hat{v}_1) - \gamma^{-1/2}\hat{v}_1\Delta_g(\gamma^{-1/2}\hat{v}_0))\Big)\,dV_g\\
	&= \sum_{k=1}^{2} \int_{\Om} T^k\p_kv_0g(\nabla v_1, \nabla v_2) \\
	&+ T^k\p_kv_1\left(\Delta_g(v_0v_2) + 2\left(g\left(\nabla(\gamma^{-1/2}),\nabla \hat{v}_0\right)\hat{v}_2 + g\left(\nabla(\gamma^{-1/2}),\nabla \hat{v}_2\right)\hat{v}_0\right) + 2Q\hat{v}_0\hat{v}_2\right)\\
	&+ T^k\p_kv_2\left(\Delta_g(v_0v_1) + 2\left(g\left(\nabla(\gamma^{-1/2}),\nabla \hat{v}_0\right)\hat{v}_1 + g\left(\nabla(\gamma^{-1/2}),\nabla \hat{v}_1\right)\hat{v}_0\right) + 2Q\hat{v}_0\hat{v}_1\right)\,dV_g\\
	&:= I_1 + I_2 + I_3.
\end{align*}
Here $Q:=\Delta_g(\gamma^{-1/2}) - \gamma^{-1/2}q$.
We choose the functions $\hat{v}_k$ as follows: Let $\Psi, \Phi, a_k$ be holomorphic Morse functions, $k=1,2$, and $a_0$ antiholomorphic such that $\pm\Psi + \Phi$ has no critical points and $\Phi$ has critical points, see Section \ref{sec:CGOs}. Then
\begin{align*}
	\hat{v}_0 &= e^{-2\ol{\Phi}/h}(a_0 + r_0)\\
	\hat{v}_1 &= e^{(\Psi+\Phi)/h}(a_1 + r_1)\\
	\hat{v}_2 &= e^{(-\Psi+\Phi)/h}(a_2 + r_2)
\end{align*}
and
\begin{equation*}
	v_k = \gamma^{-1/2}\hat{v}_k\quad\text{ for } k=0,1,2.
\end{equation*}
At this point it is beneficial to use holomorphic coordinates around an interior point $p\in\Om$. In these coordinates $z=x+iy$, we have $g(z)=c(z)dzd\oz$ where $c$ is a smooth function, $dz=dx+idy$ and $d\oz=dx-idy$. Recall also the operators
\begin{equation}\label{doo_doobar}
	\p = \frac{1}{2}(\p_x - i\p_y)\quad \text{and}\quad \ol{\p} = \frac{1}{2}(\p_x + i\p_y),
\end{equation}
thus $\nabla_{(x,y)}=(\p+\ol{\p},i(\p-\ol{\p}))$.
In these coordinates
\begin{equation}\label{holomorp_coord}
	g(\nabla u,\nabla v)=2c^{-1}(\p u\dbar v + \dbar u\p v),\quad\Delta_gu = -2ic^{-1}\ol{\p}\p u\quad\text{and}\quad dV_g=c(z)\frac{i}{2}dzd\oz.
\end{equation}

Since $v_1$ and $v_2$ have a holomorphic phase, they act similarly in the asymptotic analysis, and thus it is enough to do analysis for $I_2$ as the analysis for $I_3$ is the same and in the end we get the same terms from stationary phase (see Lemmas \ref{lemma_A}, \ref{lemma_B}, \ref{lemma_C} below). Using the above, we have for $I_2$ after integrating by parts
\begin{align*}
	I_2 &= \sum_{k=1}^2 \int_{\Om}T^k\p_k v_1 \ol{\p}\p(v_0v_2)\,dzd\oz\\
	&+ i\int_{\Om} cT^k \p_k v_1 \left(g\left(\nabla(\gamma^{-1/2}),\nabla \hat{v}_0\right)\hat{v}_2 + g\left(\nabla(\gamma^{-1/2}),\nabla \hat{v}_2\right)\hat{v}_0\right)\,dzd\oz\\
	&+ i\int_{\Om} cT^k\p_kv_1Q\hat{v}_0\hat{v}_2\,dzd\oz\\
	&= I_{2,1} + I_{2,2} + I_{2,3}
\end{align*}
and similarly for $I_3$. Here we make a slight abuse of notation by writing again $T^k$ for its representation in holomorphic coordinates.

In the analysis of the above integrals we will use stationary phase arguments for terms including $T$. For this we need that $T$ vanishes to high order on the boundary $\p\Om$. Recall that 
\begin{equation*}
	T:=\left(\frac{\lambda J_{\phi} \phi^*\nabla \tilde u_0}{1+\abs{\nabla \tilde u_0}^2|_{\phi}} - \frac{\nabla u_0}{1+\abs{\nabla u_0}^2}\right)\abs{g}^{-1/2}.
\end{equation*}
As explained in Section \ref{sec_boundary} we have that $\Phi$ is the identity, $\lambda=1$ and $u_0=\tilde u_0$ to high order on $\p\Om$. Thus $T$ vanishes to high order on $\p\Om$. We are now ready to prove the following:

\begin{lem}\label{lemma_A}
	Define $A_5=2T^kc^{-1}\dbar(\gamma^{-1/2}) + \dbar T^k\gamma^{-1/2}$. Then
	$$\lim_{h\to 0} hI_{2,1} = \sum_{k=1}^2Ci^{k-1}(A_5\gamma^{-1})|_{z=z_0}$$
	for some constant $C>0$.
\end{lem}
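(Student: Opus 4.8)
The plan is to evaluate $I_{2,1}=\sum_{k=1}^{2}\int_{\Om}T^k(\p_k v_1)\,\dbar\p(v_0v_2)\,dzd\oz$ by a stationary-phase argument in the holomorphic coordinate $z$ centered at the critical point $z_0$ of $\Phi$. First I would insert the CGO Ans\"atze $\hat{v}_0=e^{-2\ol\Phi/h}(a_0+r_0)$, $\hat{v}_1=e^{(\Psi+\Phi)/h}(a_1+r_1)$, $\hat{v}_2=e^{(-\Psi+\Phi)/h}(a_2+r_2)$ and $v_k=\gamma^{-1/2}\hat{v}_k$, expand $\p_k v_1$ and $\dbar\p(v_0v_2)$ by the Leibniz rule, and group the integrand by powers of $h^{-1}$. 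The exponential factors multiply to $e^{(2\Phi-2\ol\Phi)/h}=e^{4i\,\Imm\Phi/h}$, a purely oscillatory phase whose stationary point in the chart is $z_0$ (since $|\Phi'|^{2}=|\nabla\Imm\Phi|^{2}$, and $z_0$ is a nondegenerate, necessarily saddle, critical point because $\Phi$ is Morse). Three structural cancellations are what make the limit finite after multiplying by $h$: holomorphicity of the phase of $v_1$ gives $\dbar(\Psi+\Phi)=0$ and $\dbar(\p_k(\Psi+\Phi))=0$, so a $\dbar$-derivative never improves the $h^{-1}$-order of $\p_k v_1$; $\dbar\p(-2\ol\Phi-\Psi+\Phi)=0$, which removes the worst $h^{-2}$ term of $\dbar\p(v_0v_2)$; and $\dbar(-2\ol\Phi-\Psi+\Phi)=-2\ol{\p\Phi}$ vanishes to exactly first order at $z_0$, so the naively leading $h^{-3}$ piece of the integrand vanishes to first order at $z_0$ and, by stationary phase, contributes to $I_{2,1}$ only at order $h^{-1}$: its would-be $h^{-2}$ contribution drops out because the leading stationary-phase coefficient is the amplitude at $z_0$, while the $h^{-1}$ contribution is produced by the second-order stationary-phase operator acting on that first-order-vanishing amplitude.

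Before invoking stationary phase I would dispose of the correction terms. Since $\pm\Psi+\Phi$ have no critical points, \eqref{eq: no critical point estimate} gives $\norm{r_1}_{L^p}+\norm{dr_1}_{L^p}$ and $\norm{r_2}_{L^p}+\norm{dr_2}_{L^p}=O(h)$, so each occurrence of $r_1,r_2$ or of a first derivative thereof carries an extra $h$; upgrading the crude H\"older bound by the oscillatory integration-by-parts estimate \eqref{eq_integrate_by_parts_doo} (which uses that $T$, hence every amplitude factor, vanishes to high order on $\p\Om$) together with the decay estimate \eqref{eq:sobo_decayL2} for $\dbar_\psi^{-1}$ shows these pieces are $o(h^{-1})$. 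The term $r_0$, whose phase $-2\ol\Phi$ does have a critical point and which is therefore only $O(h^{1/p+\eps})$ by \eqref{eq:CZ_rh_norms}, is handled by the same estimate \eqref{eq_integrate_by_parts_doo}, and its contributions are $o(h^{-1})$ as well. After these reductions one may replace $a_j+r_j$ by $a_j$ in every amplitude surviving in the limit.

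The remaining step is the stationary-phase evaluation at $z_0$, where two families of terms survive. The genuine $h^{-2}$ term in which one derivative hits the exponential of $v_1$ — producing $\p_k(\Psi+\Phi)=i^{k-1}\p(\Psi+\Phi)$, which explains the factor $i^{k-1}$ — and the other hits the exponential of $v_0v_2$, whose $\p$-derivative $\p(-2\ol\Phi-\Psi+\Phi)|_{z_0}=-\p\Psi(z_0)\neq0$ (nonzero since $\Psi+\Phi$ has no critical points and $\p\Phi(z_0)=0$) survives at $z_0$, contributes at order $h^{-1}$ through the leading stationary-phase coefficient; this brings in $\dbar(\gamma^{-1}a_0a_2)$, hence $\dbar\gamma^{-1}$ and $\dbar a_0$. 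The first-order-vanishing $h^{-3}$ term contributes at order $h^{-1}$ through the second-order stationary-phase operator, bringing in $\Phi''(z_0)$, the inverse Hessian of $\Imm\Phi$ at $z_0$, and $\dbar$-derivatives of the smooth amplitude factors (including $\dbar T^k$ and $\dbar\gamma^{-1/2}$). Combining these, using $dz\,d\oz=-2i\,dx\,dy$ and the explicit dependence $\gamma=|g|^{-1/2}=c^{-1}$, the geometric data ($\Phi''(z_0)$, the Hessian of $\Imm\Phi$, the values $a_j(z_0)$, and the universal stationary-phase prefactor) collapse into the constant $C$ of the statement, and what is left of the amplitude is precisely $A_5\gamma^{-1}$ at $z_0$, yielding $\lim_{h\to0}hI_{2,1}=\sum_{k=1}^{2}Ci^{k-1}(A_5\gamma^{-1})|_{z=z_0}$. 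I expect the main obstacle to be exactly this last bookkeeping: verifying that the order-$h^{-3}$ and order-$h^{-2}$ contributions conspire, after the cancellations forced by holomorphicity and by the vanishing of $\ol{\p\Phi}$ at $z_0$, to leave the combination $A_5=2T^kc^{-1}\dbar(\gamma^{-1/2})+\dbar T^k\gamma^{-1/2}$ and nothing else, while keeping rigorous track that the $r_h$-dependent remainders never reach the $h^{-1}$ order.
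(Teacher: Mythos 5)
Your strategy is in the right family (stationary phase at $z_0$, keeping only the $h^{-1}$ coefficient), but you chose a genuinely different decomposition from the paper, and that choice introduces two technical obstructions that your sketch does not resolve.

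The paper does \emph{not} expand $\dbar\p(v_0v_2)$ directly. It first integrates by parts to write $I_{2,1}=\sum_k\int_\Omega \dbar\p(T^k\p_kv_1)\,v_0v_2\,dzd\oz$, so that all second-order derivatives fall on $T^k$ and on $\hat{v}_1$, never on $\hat v_0$ or $\hat v_2$. This is essential, not cosmetic. If you keep $\dbar\p$ on $v_0v_2$, you necessarily produce $\dbar\p\hat v_0$, and since $\p a_0=0$ and $\p e^{-2\ol\Phi/h}=0$ this is $e^{-2\ol\Phi/h}\bigl(-\tfrac{2}{h}\ol{\p\Phi}\,\p r_0+\dbar\p r_0\bigr)$. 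The term $\dbar\p r_0$ is a full second derivative of the remainder whose phase $-2\ol\Phi$ \emph{has} a critical point at $z_0$. The only second-order estimate the paper provides, \eqref{eq_cald_zygm}, explicitly requires the phase to have \emph{no} critical point; it applies to $r_1,r_2$ (phases $\pm\Psi+\Phi$) but not to $r_0$. So $\dbar\p r_0$ is uncontrolled by the available machinery, and the whole $h^{-1}$ extraction collapses. In the paper's version, the only $\nabla^2 r$ that appears is $\p^2 r_1$, which is covered by \eqref{eq_cald_zygm}.

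The second problem is your treatment of the correction terms in the $h^{-3}$ piece. You assert that \eqref{eq_integrate_by_parts_doo} together with \eqref{eq:sobo_decayL2} shows they are $o(h^{-1})$. But \eqref{eq_integrate_by_parts_doo} gives $\int e^{i\hat\psi/h}f\,r_h=O(h^{1+\eps})$, and $h^{-3}\cdot O(h^{1+\eps})=O(h^{-2+\eps})$, which is not $o(h^{-1})$. The extra decay you need can only come from exploiting that the $h^{-3}$ amplitude carries the factor $\dbar\chi=-2\ol{\p\Phi}$, which vanishes at $z_0$ — but to use that you have to write $\tfrac{1}{h}\ol{\p\Phi}\,e^{4i\Imm\Phi/h}=-\tfrac12\dbar e^{4i\Imm\Phi/h}$ and integrate by parts, which (a) again generates $\dbar r_0$ with the critical-point problem above, and (b) is precisely the integration by parts the paper performs at the outset. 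In short, your estimates are not strong enough as stated, and the fix converges on the paper's starting point. Start the proof from $\int\dbar\p(T^k\p_kv_1)\,v_0v_2$, group into the $A_1,\dots,A_5$ amplitudes, and the $A_5\,\p\p_k\hat v_1\,v_0v_2$ term then yields the statement by a single first-order stationary phase with nonvanishing amplitude at $z_0$.
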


\begin{proof}
	Integrating by parts in $I_{2,1}$ we have
	\begin{align*}
		I_{2,1} &= \sum_{k=1}^2 \int_{\Om}\ol{\p}\p(T^k\p_kv_1)v_0v_2\,dzd\oz\\
		&= \sum_{k=1}^2 \int_{\Om}\ol{\p}\p(T^k)\p_kv_1v_0v_2 + T^k\ol{\p}\p(\p_kv_1)v_0v_2 + \ol{\p}T^k\p\p_k(v_1)v_0v_2 + \p T^k\ol{\p}\p_kv_1v_0v_2\,dzd\oz\\
		&=  \sum_{k=1}^2 \int_{\Om}\ol{\p}\p(T^k)\p_kv_1v_0v_2 + T^k\p_k(2g\left(\nabla(\gamma^{-1/2}),\nabla \hat{v}_1\right) + Q\hat{v}_1)v_0v_2\\
		& + \ol{\p}T^k\p\p_k(v_1)v_0v_2 + \p T^k\ol{\p}\p_kv_1v_0v_2\,dzd\oz.
	\end{align*}
	Using \eqref{doo_doobar} and \eqref{holomorp_coord} we can group the integral $I_{2,1}$ as follows:
	\begin{align*}
		I_{2,1}&= \sum_{k=1}^2 \int_{\Om}\Big(\dbar\p(T^k)\p_k(\gamma^{-1/2}) + T^k\p_kQ + \dbar T^k\p\p_k(\gamma^{-1/2}) + \p T^k \dbar\p_k(\gamma^{-1/2})\Big)\hat{v}_1v_0v_2\\
		&+ \Big(i^{k-1}\dbar\p(T^k)\gamma^{-1/2} + 2T^k\p_k(c^{-1})\dbar(\gamma^{-1/2}) + 2T^kc^{-1}\p_k\dbar(\gamma^{-1/2}) + i^{k-1}T^kQ \\
		&+ \dbar T^k\p_k(\gamma^{-1/2}) + i^{k-1}\dbar T^k\p(\gamma^{-1/2}) + i^{k-1}\p T^k\dbar(\gamma^{-1/2})\Big)\p \hat{v}_1v_0v_2\\
		&+\Big((-i)^{k-1}\dbar\p(T^k)\gamma^{-1/2} + 2T^k\p_k(c^{-1})\p(\gamma^{-1/2}) + 2T^kc^{-1}\p_k\p(\gamma^{-1/2}) + (-i)^{k-1}T^kQ \\
		&+ (-i)^{k-1}\dbar T^k\p(\gamma^{-1/2}) + \p T^k\p_k(\gamma^{-1/2}) + (-i)^{k-1}\p T^k \dbar(\gamma^{-1/2})\Big)\dbar \hat{v}_1v_0v_2\\
		&+\Big(2T^kc^{-1}\p(\gamma^{-1/2}) + \p T^k\gamma^{-1/2}\Big)\dbar\p_k \hat{v}_1v_0v_2 \\
		&+ \Big(2T^kc^{-1}\dbar(\gamma^{-1/2}) + \dbar T^k\gamma^{-1/2}\Big)\p\p_k \hat{v}_1v_0v_2\,dzd\oz\\
		&= \sum_{k=1}^2\int_{\Om} A_1\hat{v}_1v_0v_2 + A_2\p \hat{v}_1v_0v_2 + A_3\dbar \hat{v}_1v_0v_2 + A_4\dbar\p_k \hat{v}_1v_0v_2 + A_5\p\p_k \hat{v}_1v_0v_2\,dzd\oz.
	\end{align*}
    All of the terms $A_j$ depend on the summation index $k$.
	
	In the following we do not write the summation over $k$. Next we further analyze $I_{2,1}$ term by term, starting from the term including $A_1$:
	\begin{align*}
		\int_{\Om}A_1\hat{v}_1v_0v_2\,dzd\oz &= \int_{\Om}\gamma^{-1}A_1e^{(2\Phi - 2\Phi)/h}(a_0 + r_0)(a_1 + r_1)(a_2 + r_2)\,dzd\oz = O(1)
	\end{align*}
	as $h\to0$, since the functions $\gamma, A_1, a_j$ are bounded in $\Om$ and by the estimates for $r_j$ (see \eqref{eq:CZ_rh_norms} and \eqref{eq: no critical point estimate}). Next the term with $A_2$:
	\begin{align*}
		&\int_{\Om}A_2\p \hat{v}_1v_0v_2\\
		&= \frac{1}{h}\int_{\Om}\gamma^{-1}A_2\p(\Psi + \Phi)e^{(2\Phi - 2\ol{\Phi})/h}(a_0 + r_0)(a_1 + r_1)(a_2 + r_2)\,dzd\oz \\
		&+ \int_{\Om}\gamma^{-1}A_2e^{(\Psi + \Phi)/h}(\p a_1 + \p r_1)\hat{v}_0\hat{v}_2\,dzd\oz\\
		&=\frac{1}{h}\int_{\Om}\gamma^{-1}A_2 \p(\Psi + \Phi)e^{(2\Phi - 2\ol{\Phi})/h}a_0a_1a_2\,dzd\oz + O(h^{-1/2+\e}) + O(1),
	\end{align*}
	where the term $O(h^{-1/2+\e})$ comes from the fact that all the terms include at least one $r_j$ and the term $O(1)$ similarly as for the term with $A_1$. The last term will be $O(1)$ by stationary phase and hence
	\begin{equation}\label{A_2}
		\int_{\Om}A_2\p \hat{v}_1v_0v_2 = O(h^{-1/2+\e}).
	\end{equation}
	The term with $A_3$ is a little bit simpler since $\dbar \Phi= \dbar\Psi = \dbar a_1=0$ by the holomorphicity of these functions. Thus
	\begin{align*}
		\int_{\Om} A_3\dbar \hat{v}_1v_0v_2\,dzd\oz = \int_{\Om}\gamma^{-1}A_3e^{(\Psi + \Phi)/h}\dbar r_1\hat{v}_0\hat{v}_2\,dzd\oz = O(h)
	\end{align*}
	because of the term $\dbar r_1$, see \eqref{eq: no critical point estimate}. Continuing with $A_4$:
	\begin{align*}
		\int_{\Om} A_4\dbar \p_k \hat{v}_1 v_0v_2\,dzd\oz &= \int_{\Om}\gamma^{-1}A_4\left(i^{k-1}\dbar\p \hat{v}_1\hat{v}_0\hat{v}_2 + (-i)^{k-1}\dbar^2 \hat{v}_1\hat{v}_0\hat{v}_2\right)\,dzd\oz\\
		&= \int_{\Om}\gamma^{-1}A_4\frac{i^{k-2}}{2}cq\hat{v}_1\hat{v}_0\hat{v}_2\,dzd\oz \\
		&+ \int_{\Om}\gamma^{-1}A_4(-i)^{k-1}\dbar^2\left(e^{(\Psi+\Phi)/h}(a_1+r_1)\right)\hat{v}_0\hat{v}_2\,dzd\oz\\
		&= O(1) + \int_{\Om}\gamma^{-1}A_4(-i)^{k-1}\dbar^2\left(e^{(\Psi+\Phi)/h}(a_1+r_1)\right)\hat{v}_0\hat{v}_2\,dzd\oz
	\end{align*}
	and this can be seen by the same arguments as for the term with $A_1$. For the rest, we notice that each time the operator $\dbar$ hits $e^{(\Psi+\Phi)/h}$ we get zero since $\dbar \Phi= \dbar\Psi=0$. Also $\dbar a_1=0$, hence
	\begin{align*}
		&\int_{\Om}\gamma^{-1}A_4(-i)^{k-1}\dbar^2\left(e^{(\Psi+\Phi)/h}(a_1+r_1)\right)\hat{v}_0\hat{v}_2\,dzd\oz\\
		&= \int_{\Om} \gamma^{-1}A_4(-i)^{k-1}e^{(\Psi+\Phi)/h}\dbar^2r_1\hat{v}_0\hat{v}_2\\
		&= O(1)
	\end{align*}
	by the same arguments as before and using the Calderón-Zygmund type estimate in \eqref{eq_cald_zygm} for $\dbar^2 r_1$.
	
	The term with $A_5$ is the most complicated of these. First of all
	\begin{align*}
		&\int_{\Om}A_5\p\p_k \hat{v}_1v_0v_2\,dzd\oz\\
		&= \int_{\Om}A_5i^{k-1}\p^2\hat{v}_1v_0v_2\,dzd\oz + \int_{\Om}A_5(-i)^{k-1}\p\dbar \hat{v}_1 v_0v_2\,dzd\oz\\
		&= \int_{\Om}A_5\gamma^{-1}i^{k-1}\p\left(\frac{1}{h}\p(\Psi + \Phi)e^{(\Psi + \Phi)/h}(a_1+r_1) + e^{(\Psi + \Phi)/h}(\p a_1 + \p r_1)\right) \hat{v}_0\hat{v}_2\,dzd\oz\\
		&+ O(1)
	\end{align*}
	by the same arguments as for the integral with $A_4$. Expanding further
	\begin{align*}
		&\int_{\Om}A_5\p\p_k \hat{v}_1v_0v_2\,dzd\oz\\
		&= \int_{\Om}A_5\gamma^{-1}i^{k-1}\frac{1}{h^2}(\p(\Psi + \Phi))^2e^{(\Psi + \Phi)/h}(a_1+r_1)\hat{v}_0\hat{v}_2\,dzd\oz\\
		&+ \int_{\Om}A_5\gamma^{-1}i^{k-1}\frac{1}{h}\p^2(\Psi + \Phi)e^{(\Psi + \Phi)/h}(a_1+r_1)\hat{v}_0\hat{v}_2\,dzd\oz\\
		&+ 2\int_{\Om}A_5\gamma^{-1}i^{k-1}\frac{1}{h}\p(\Psi + \Phi)e^{(\Psi + \Phi)/h}(\p a_1+\p r_1)\hat{v}_0\hat{v}_2\,dzd\oz\\
		&+ \int_{\Om}A_5\gamma^{-1}i^{k-1}e^{(\Psi + \Phi)/h}(\p^2a_1+\p^2r_1)\hat{v}_0\hat{v}_2\,dzd\oz + O(1).
	\end{align*}
	Focusing first on the terms with $\frac{1}{h}$, we get that they are $O(h^{-1/2+\e})$ by stationary phase (for terms with no remainders) and that the remainders $r_k$ satisfy the estimates \eqref{eq:CZ_rh_norms}, \eqref{eq: no critical point estimate}. For the ones with $\frac{1}{h^2}$ we have three kinds of terms: one without any $r_k$, ones with just one $r_k$ and ones with two or more $r_k$:
	\begin{align*}
		&\int_{\Om}A_5\gamma^{-1}i^{k-1}\frac{1}{h^2}(\p(\Psi + \Phi))^2e^{(\Psi + \Phi)/h}(a_1+r_1)\hat{v}_0\hat{v}_2\,dzd\oz\\
		&= \frac{1}{h^2}\int_{\Om}A_5\gamma^{-1}i^{k-1}(\p(\Psi + \Phi))^2e^{(2\Phi - 2\ol{\Phi})/h}a_0a_1a_2\,dzd\oz\\
		&+ \frac{1}{h^2}\int_{\Om}A_5\gamma^{-1}i^{k-1}(\p(\Psi + \Phi))^2e^{(2\Phi - 2\ol{\Phi})/h}\Big(a_1a_0r_2 + a_1r_0a_2 + a_0r_1a_2 \\
		&+ a_1r_0r_2 + a_0r_1r_2 + r_0r_1a_2 + r_0r_1r_2\Big)\,dzd\oz.
	\end{align*}
	Because $\Psi$ and $\Phi$ were chosen so that $\p(\Psi + \Phi)(z_0) \neq 0$ we can use stationary phase to conclude that the first term is
	\begin{equation*}
		\frac{1}{h^2}\int_{\Om}A_5\gamma^{-1}i^{k-1}(\p(\Psi + \Phi))^2e^{(2\Phi - 2\ol{\Phi})/h}a_0a_1a_2\,dzd\oz = C\frac{1}{h}i^{k-1}(A_5\gamma^{-1})|_{z=z_0} + O(1)
	\end{equation*}
	for some constant $C>0$. For the terms with at least two $r_k$, we can use the estimates \eqref{eq:CZ_rh_norms}, \eqref{eq: no critical point estimate} for these to conclude that they are $O(h^{-1/2+\e})$ as $h\to 0$. We are then left with the terms that have only one $r_k$. Let us consider the case with $r_0$. The other two are exactly the same and the estimates for $r_1, r_2$ are better than for $r_0$. Now by the definition of $r_0$ we have
	\begin{align}\label{A_5_last}
		&\frac{1}{h^2}\int_{\Om}A_5\gamma^{-1}i^{k-1}(\p(\Psi + \Phi))^2e^{(2\Phi - 2\ol{\Phi})/h}a_1a_2r_0\,dzd\oz\\\notag
		&=-\frac{1}{h^2}\int_{\Om}A_5\gamma^{-1}i^{k-1}(\p(\Psi + \Phi))^2e^{(2\Phi - 2\ol{\Phi})/h}a_1a_2\p^{-1}_{\Imm(-2\ol{\Phi})}s_0\,dzd\oz\\\notag
		%
        &= O(h^{-1+\e}).
	\end{align}
	Here we used that one can integrate by parts with the operator $\p^{-1}_{\psi}$, see \eqref{eq_integrate_by_parts_doo}.
    All in all we have
	\begin{align}\label{A_5}
		\int_{\Om}A_5\p\p_k \hat{v}_1v_0v_2\,dzd\oz = C\frac{1}{h}i^{k-1}(A_5\gamma^{-1})|_{z=z_0} + O(h^{-1+\e})
	\end{align}
	which concludes the proof.
\end{proof}

Now we move to analyze the integral $I_{2,2}$ (i.e. to \eqref{I_22_B}).

\begin{lem}\label{lemma_B}
	Define $B_7=i^{k-1}T^k\dbar(\gamma^{-1/2})$. Then
	\begin{equation*}
		\lim_{h\to 0} hI_{2,2} = \sum_{k=1}^2iC(B_7\gamma^{-1/2})|_{z=z_0}
	\end{equation*}
	for some constant $C>0$.
\end{lem}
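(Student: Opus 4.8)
The strategy closely follows the proof of Lemma~\ref{lemma_A}: pass to holomorphic coordinates centred at the interior critical point $z_0$ of $\Phi$, expand every factor in powers of $h^{-1}$, isolate the one term whose leading $h^{-2}$-part has an amplitude not vanishing at $z_0$, and discard everything else as $o(h^{-1})$ using stationary phase and the CGO remainder estimates. The first step is to regroup the integrand of $I_{2,2}$ by the product rule, using the holomorphic-coordinate formula \eqref{holomorp_coord}. Since $g(\nabla(\gamma^{-1/2}),\nabla\hat v_{0})\hat v_{2}+g(\nabla(\gamma^{-1/2}),\nabla\hat v_{2})\hat v_{0}=2c^{-1}\big(\p(\gamma^{-1/2})\,\dbar(\hat v_{0}\hat v_{2})+\dbar(\gamma^{-1/2})\,\p(\hat v_{0}\hat v_{2})\big)$, the factor $c$ cancels against $c^{-1}$ and
\begin{equation}\label{I_22_B}
	I_{2,2}=2i\sum_{k=1}^{2}\int_{\Om}T^{k}\p_{k}v_{1}\Big(\p(\gamma^{-1/2})\,\dbar(\hat v_{0}\hat v_{2})+\dbar(\gamma^{-1/2})\,\p(\hat v_{0}\hat v_{2})\Big)\,dzd\oz=:J_{1}+J_{2},
\end{equation}
where $J_{1}$ is the $\p(\gamma^{-1/2})$-summand and $J_{2}$ the $\dbar(\gamma^{-1/2})$-summand.

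Next I would expand using \eqref{doo_doobar}, the holomorphicity of $\Psi,\Phi,a_{1},a_{2}$, the antiholomorphicity of $a_{0}$, and $v_{k}=\gamma^{-1/2}\hat v_{k}$, so that $\p_{k}v_{1}=\frac{1}{h}i^{k-1}\p(\Psi+\Phi)\,v_{1}+(\text{amplitude }O(1))$, $\dbar(\hat v_{0}\hat v_{2})=-\frac{2}{h}(\dbar\overline{\Phi})\,\hat v_{0}\hat v_{2}+(\text{amplitude }O(1))$ and $\p(\hat v_{0}\hat v_{2})=\frac{1}{h}\p(-\Psi+\Phi)\,\hat v_{0}\hat v_{2}+(\text{amplitude }O(1))$; every product of CGOs carries the total phase $e^{(2\Phi-2\overline{\Phi})/h}$, whose stationary points are exactly the critical points of $\Phi$. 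In $J_{1}$ the single $h^{-2}$-term has amplitude proportional to $\dbar\overline{\Phi}=\overline{\p\Phi}$, which vanishes at $z_{0}$ because $\p\Phi(z_{0})=0$; hence by stationary phase with an amplitude vanishing at the stationary point it is $O(1)$, while the $h^{-1}$- and $O(1)$-parts of $J_{1}$ are $O(1)$ by ordinary stationary phase and the parts involving correction terms are handled as for $J_{2}$ below, so $hJ_{1}\to0$. In $J_{2}$ the term obtained by pairing the $\frac{1}{h}$-part of $\p_{k}v_{1}$ with the $\frac{1}{h}$-part of $\p(\hat v_{0}\hat v_{2})$ and keeping the amplitudes $a_{0}a_{1}a_{2}$ is
\begin{equation*}
	\frac{2i}{h^{2}}\int_{\Om}i^{k-1}\p(\Psi+\Phi)\,\p(-\Psi+\Phi)\,T^{k}\dbar(\gamma^{-1/2})\,\gamma^{-1/2}\,e^{(2\Phi-2\overline{\Phi})/h}a_{0}a_{1}a_{2}\,dzd\oz .
\end{equation*}
Since $\pm\Psi+\Phi$ have no critical points, $\p(\Psi+\Phi)$ and $\p(-\Psi+\Phi)$ are nowhere zero, and at $z_{0}$, where $\p\Phi(z_{0})=0$, they equal $\p\Psi(z_{0})$ and $-\p\Psi(z_{0})$; applying stationary phase and recalling $B_{7}=i^{k-1}T^{k}\dbar(\gamma^{-1/2})$ turns this into $\frac{iC}{h}(B_{7}\gamma^{-1/2})|_{z=z_{0}}+O(1)$, where $C$ collects the universal stationary-phase constant and the fixed numbers $-(\p\Psi(z_{0}))^{2}$ and $a_{0}(z_{0})a_{1}(z_{0})a_{2}(z_{0})$. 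Summing over $k$ gives the asserted limit, provided all remaining terms are $o(h^{-1})$.

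The bulk of the work — and the step I expect to be the main obstacle, just as in Lemma~\ref{lemma_A} — is to show that everything else in $J_{2}$ is $o(h^{-1})$. The $h^{-1}$- and $O(1)$-contributions are $O(1)$ by stationary phase together with \eqref{eq:CZ_rh_norms} and \eqref{eq: no critical point estimate}. Among the $h^{-2}$-contributions, those with two or more correction terms $r_{j}$ are $o(h^{-1})$ by Hölder and \eqref{eq:CZ_rh_norms} (for $r_{0}$, attached to the critical-point phase $-2\overline{\Phi}$) and \eqref{eq: no critical point estimate} (for $r_{1},r_{2}$, attached to the critical-point-free phases $\pm\Psi+\Phi$). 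The delicate ones are the $h^{-2}$-contributions with exactly one $r_{j}$: these are handled by the integration-by-parts estimate \eqref{eq_integrate_by_parts_doo} (exactly as in the treatment of \eqref{A_5_last}), which applies precisely because $T$, hence the whole amplitude, vanishes to high order on $\p\Om$ by Section~\ref{sec_boundary}; this yields $O(h^{-1+\e})$. Assembling these bounds gives $hI_{2,2}=hJ_{2}+o(1)\to\sum_{k=1}^{2}iC(B_{7}\gamma^{-1/2})|_{z=z_{0}}$. The one genuinely new point compared with Lemma~\ref{lemma_A} is the bookkeeping in \eqref{I_22_B}: one must verify that the surviving $h^{-2}$-term comes from the $\dbar(\gamma^{-1/2})$-summand $J_{2}$, whose phase factor $\p(\Psi+\Phi)\p(-\Psi+\Phi)$ is non-degenerate at $z_{0}$, and not from $J_{1}$, whose analogous term is killed by the extra factor $\dbar\overline{\Phi}$ that vanishes at a critical point of $\Phi$.
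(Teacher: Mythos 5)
Your proposal is correct and follows the same strategy as the paper's proof: pass to holomorphic coordinates, expand using \eqref{holomorp_coord} and the (anti)holomorphicity of $\Psi,\Phi,a_k$, discard all contributions that are $o(h^{-1})$ via stationary phase, the remainder estimates \eqref{eq:CZ_rh_norms}, \eqref{eq: no critical point estimate}, and the integration-by-parts device \eqref{eq_integrate_by_parts_doo} for the single-$r_j$ terms at order $h^{-2}$, and then read off the surviving $h^{-1}$ contribution from stationary phase at $z_0$. The one organizational difference is your regrouping $I_{2,2}=J_1+J_2$ via the product-rule identity $g(\nabla(\gamma^{-1/2}),\nabla\hat v_0)\hat v_2+g(\nabla(\gamma^{-1/2}),\nabla\hat v_2)\hat v_0=2c^{-1}\big(\p(\gamma^{-1/2})\dbar(\hat v_0\hat v_2)+\dbar(\gamma^{-1/2})\p(\hat v_0\hat v_2)\big)$: the paper instead splits the integrand into eight pieces $B_1,\dots,B_8$ (your $J_1$ is the union of the paper's $B_1,B_2,B_5,B_6$, and your $J_2$ is $B_3,B_4,B_7,B_8$). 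Your grouping is arguably cleaner because it makes transparent the key bookkeeping point: the $J_1$ branch is suppressed by the factor $\dbar\ol\Phi$ (which vanishes at the stationary point $z_0$ since $\p\Phi(z_0)=0$), while the $J_2$ branch carries the nondegenerate phase factor $\p(\Psi+\Phi)\p(-\Psi+\Phi)|_{z_0}=-(\p\Psi(z_0))^2\neq 0$ and hence produces the leading $h^{-1}$ term $\sum_k iC(B_7\gamma^{-1/2})|_{z_0}$ with $B_7=i^{k-1}T^k\dbar(\gamma^{-1/2})$. The mathematical content is identical to the paper's proof.
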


\begin{proof}
	For $I_{2,2}$ we have, using the same calculations as for $I_{2,1}$,
	\begin{align}\label{I_22_B}
		-\frac{i}{2}I_{2,2} &= \sum_{k=1}^2\int_{\Om} i^{k-1}T^k\p(\gamma^{-1/2})\p v_1\dbar \hat{v}_0\hat{v}_2 + (-i)^{k-1}T^k\p(\gamma^{-1/2})\dbar v_1\dbar \hat{v}_0\hat{v}_2\\\notag
		&+ i^{k-1}T^k\dbar(\gamma^{-1/2})\p v_1\p \hat{v}_0 \hat{v}_2 + (-i)^{k-1} T^k\dbar(\gamma^{-1/2})\dbar v_1\p \hat{v}_0 \hat{v}_2\\\notag
		&+ i^{k-1}T^k\p(\gamma^{-1/2})\p v_1 \dbar \hat{v}_2 \hat{v}_0 + (-i)^{k-1}T^k\p(\gamma^{-1/2})\dbar v_1\dbar \hat{v}_2 \hat{v}_0\\\notag
		&+ i^{k-1}T^k\dbar(\gamma^{-1/2})\p v_1\p \hat{v}_2\hat{v}_0 + (-i)^{k-1}T^k\dbar(\gamma^{-1/2})\dbar v_1 \p \hat{v}_2\hat{v}_0\,dzd\oz\\\notag
		&= \sum_{k=1}^2\int_{\Om} B_1\p v_1 \dbar \hat{v}_0\hat{v}_2 + B_2\dbar v_1\dbar \hat{v}_0\hat{v}_2 + B_3\p v_1\p \hat{v}_0\hat{v}_2 + B_4\dbar v_1\p \hat{v}_0\hat{v}_2\\\notag
		&+ B_5\p v_1 \dbar \hat{v}_2\hat{v}_0 + B_6\dbar v_1\dbar \hat{v}_2\hat{v}_0 + B_7\p v_1\p \hat{v}_2\hat{v}_0 + B_8\dbar v_1\p \hat{v}_2\hat{v}_0\,dzd\oz.
	\end{align}
	Again the terms $B_j$ depend on $k$. The first term is
	\begin{align*}
		&\int_{\Om} B_1\p v_1\dbar \hat{v}_0 \hat{v}_2\,dzd\oz\\
		&= \int_{\Om}B_1\p(\gamma^{-1/2})\hat{v}_1\dbar \hat{v}_0\hat{v}_2\,dzd\oz + \int_{\Om} B_1\gamma^{-1/2}\p \hat{v}_1\dbar \hat{v}_0 \hat{v}_2\,dzd\oz\\
		&= \int_{\Om}B_1\p(\gamma^{-1/2})e^{(2\Phi - 2\ol{\Phi})/h}(a_1+r_1)\Big(\frac{-2}{h}\dbar\ol{\Phi}(a_0+r_0) + \dbar a_0 + \dbar r_0\Big)(a_2 + r_2)\,dzd\oz\\
		&+ \int_{\Om}B_1\gamma^{-1/2}e^{(2\Phi - 2\ol{\Phi})/h}\Big(\frac{1}{h}\p(\Psi + \Phi)(a_1 + r_1) + \p a_1 + \p r_1\Big)\\
		&\times \Big(\frac{-2}{h}\dbar\ol{\Phi}(a_0+r_0) + \dbar a_0 + \dbar r_0\Big)(a_2 + r_2)\,dzd\oz\\
		&= O(h^{-1/2+\e}) + O(1)
	\end{align*}
	by stationary phase and the estimates for $r_k$ (see \eqref{eq:CZ_rh_norms}, \eqref{eq: no critical point estimate}). For the second term in \eqref{I_22_B} we have
	\begin{align*}
		&\int_{\Om}B_2\dbar v_1\dbar \hat{v}_0\hat{v}_2 \,dzd\oz\\
		&= \int_{\Om}B_2\dbar(\gamma^{-1/2})\hat{v}_1\dbar \hat{v}_0\hat{v}_2\,dzd\oz + \int_{\Om} B_2\gamma^{-1/2}\dbar \hat{v}_1\dbar \hat{v}_0 \hat{v}_2\,dzd\oz\\
		&= O(h^{-1/2}+\e)\\
		&+ \int_{\Om} B_2\gamma^{-1/2}e^{(2\Phi - 2\ol{\Phi})/h}\dbar r_1\Big(\frac{-2}{h}\dbar\ol{\Phi}(a_0+r_0) + \dbar a_0 + \dbar r_0\Big)(a_2 + r_2)\,dzd\oz\\
		&= O(h^{-1/2+\e}) + O(1)
	\end{align*}
	where the first term is as above with $B_1$ and the second is handled with the estimate \eqref{eq: no critical point estimate} for $\dbar r_1$. Next the term with $B_3$:
	\begin{align*}
		&\int_{\Om}B_3 \p v_1 \p \hat{v}_0\hat{v}_2\,dzd\oz\\
		&= \int_{\Om}B_3\p(\gamma^{-1/2})\hat{v}_1\p \hat{v}_0\hat{v}_2\,dzd\oz + \int_{\Om} B_3\gamma^{-1/2}\p \hat{v}_1\p \hat{v}_0 \hat{v}_2\,dzd\oz\\
		&= \int_{\Om}B_3\p(\gamma^{-1/2})e^{(2\Phi - 2\ol{\Phi})/h}\p r_0(a_1+r_1)(a_2+r_2)\,dzd\oz\\
		&+ \int_{\Om} B_3\gamma^{-1/2}e^{(2\Phi - 2\ol{\Phi})/h}\Big(\frac{1}{h}\p(\Psi + \Phi)(a_1 + r_1) + \p a_1 + \p r_1\Big)\p r_0(a_2 + r_2)\,dzd\oz\\
		&= O(h^{1/2+\e}) + O(h^{-1/2+\e})
	\end{align*}
	since all the terms have $\p r_0$ in them (see \eqref{eq:CZ_rh_norms}).
	For the next one we have
	\begin{align*}
		&\int_{\Om}B_4 \dbar v_1 \p \hat{v}_0\hat{v}_2\,dzd\oz\\
		&= \int_{\Om}B_4\dbar(\gamma^{-1/2})\hat{v}_1\p \hat{v}_0\hat{v}_2\,dzd\oz + \int_{\Om} B_4\gamma^{-1/2}\dbar \hat{v}_1\p \hat{v}_0 \hat{v}_2\,dzd\oz\\
		&= O(h^{1/2+\e}) + \int_{\Om} B_4\gamma^{-1/2}e^{(2\Phi - 2\ol{\Phi})/h}\dbar r_1\p r_0(a_2+r_2)\,dzd\oz\\
		&= O(h^{1/2+\e}) + O(h^{3/2+\e})
	\end{align*}
	similarly as above.
	
	For the terms with $B_5$ to $B_8$ we only consider the term with $B_7$ in detail. For $B_5, B_6$ they both have the term $\dbar \hat{v}_2 = e^{(-\Psi + \Phi)/h}\dbar r_2$ and thus are $O(h^{\alpha})$ with $\alpha>-1/2$ (see \eqref{eq: no critical point estimate}). For $B_8$ we have terms like $\hat{v}_1\p \hat{v}_2 \hat{v}_0$ and $\dbar \hat{v}_1 \p \hat{v}_2 \hat{v}_0$. The first is similar to $A_2$ (see \eqref{A_2}) and the second one falls to the same category as $B_5, B_6$. For $B_7$ we have
	\begin{align*}
		&\int_{\Om}B_7\p v_1\p \hat{v}_2\hat{v}_0\,dzd\oz\\
		&= \int_{\Om}B_7\p(\gamma^{-1/2})\hat{v}_1\p \hat{v}_2\hat{v}_0\,dzd\oz + \int_{\Om}B_7\gamma^{-1/2}\p \hat{v}_1\p \hat{v}_2\,dzd\oz\\
		&= O(h^{-1/2+\e})\\
		&+ \int_{\Om}B_7\gamma^{-1/2}e^{(2\Phi-2\ol{\Phi})/h}\Big(\frac{1}{h}\p(\Psi + \Phi)(a_1+r_1) + \p a_1 + \p r_1\Big)\\
		&\times \Big(\frac{1}{h}\p(-\Psi + \Phi)(a_2+r_2) + \p a_2 + \p r_2\Big)(a_0+r_0)\,dzd\oz,
	\end{align*}
	where the first integral was similar to \eqref{A_2}. The rest we divide into three cases by the powers of $h$. If there is $h^0$, then using stationary phase or the estimates for $r_k$ (see \eqref{eq:CZ_rh_norms}, \eqref{eq: no critical point estimate}), these terms are $O(1)$. Similarly for the terms with $h^{-1}$ and for the terms with $h^{-2}$ with at least two $r_k$ we get $O(h^{-1/2+\e})$. Then we are left with the following:
	\begin{align*}
		&\int_{\Om}B_7\p v_1\p \hat{v}_2\hat{v}_0\,dzd\oz\\
		&= \frac{1}{h^2}\int_{\Om}B_7\gamma^{-1/2}e^{(2\Phi-2\ol{\Phi})/h}\p(\Psi + \Phi)\p(-\Psi + \Phi)\big(a_0a_1a_2 + a_1a_2r_0 + a_0a_2r_1\big)\,dzd\oz\\
		&+ O(h^{-1/2+\e}).
	\end{align*}
	For the last two terms we use the definitions of $r_0$ and $r_1$, as we did in \eqref{A_5_last}, and for the first one we use stationary phase to obtain for $C>0$
	\begin{equation}\label{B_7}
		\int_{\Om}B_7\p v_1\p \hat{v}_2\hat{v}_0\,dzd\oz = C\frac{1}{h}(B_7\gamma^{-1/2})|_{z=z_0} + O(h^{-1+\e}).
	\end{equation}
	This ends the proof.
\end{proof}

Next we consider the last term in $I_2$.

\begin{lem}\label{lemma_C}
	For $I_{2,3}$ we have $\lim_{h\to 0} hI_{2,3} = 0.$
\end{lem}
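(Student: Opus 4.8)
The plan is to show directly that $I_{2,3}=O(1)$ as $h\to0$, which gives $\lim_{h\to0}hI_{2,3}=0$ immediately. The reason this works, in contrast to $I_{2,1}$ and $I_{2,2}$, is structural: the integrand of $I_{2,3}$ carries only the single first-order derivative $\p_kv_1$, together with the undifferentiated smooth factor $cQ$ and the undifferentiated CGOs $\hat v_0,\hat v_2$. Hence no $h^{-2}$ term can be produced (that would require two phase-derivative hits), the worst possible power is $h^{-1}$, and $h$ times an $O(1)$ quantity vanishes in the limit.

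First I would write $v_1=\gamma^{-1/2}\hat v_1$ and expand $\p_kv_1=\p_k(\gamma^{-1/2})\hat v_1+\gamma^{-1/2}\p_k\hat v_1$, then $\p_k\hat v_1=\tfrac1h i^{k-1}\p(\Psi+\Phi)e^{(\Psi+\Phi)/h}(a_1+r_1)+e^{(\Psi+\Phi)/h}\p_k(a_1+r_1)$, using holomorphicity of $\Psi,\Phi,a_1$ so that $\p_k(\Psi+\Phi)=i^{k-1}\p(\Psi+\Phi)$. Inserting $\hat v_0=e^{-2\ol\Phi/h}(a_0+r_0)$ and $\hat v_2=e^{(-\Psi+\Phi)/h}(a_2+r_2)$, every resulting term carries the combined phase $e^{(2\Phi-2\ol\Phi)/h}=e^{4i\,\Imm\Phi/h}$, and exactly one of them — the one coming from $\tfrac1h i^{k-1}\p(\Psi+\Phi)e^{(\Psi+\Phi)/h}(a_1+r_1)$ — carries the prefactor $h^{-1}$; the rest carry no negative power of $h$.

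For this $h^{-1}$-term I would split according to which of $r_0,r_1,r_2$ appear. The purely smooth contribution is $h^{-1}$ times an oscillatory integral with smooth amplitude against the phase $4i\,\Imm\Phi$, whose critical point at $z_0$ is nondegenerate since $\Phi$ is Morse; stationary phase in two real dimensions supplies a factor $h$, so this contribution is $O(1)$ (its value is irrelevant). The term containing $r_0$ but not $r_1,r_2$ is the one needing care: since $T$, and hence the smooth coefficient multiplying $r_0$, vanishes to high order on $\p\Om$ (Section \ref{sec_boundary}), one integrates by parts as in \eqref{eq_integrate_by_parts_doo} (in its $\tilde r_h$ form, which applies to $r_0$), gaining $h^{-1}\cdot O(h^{1+\eps})=O(h^\eps)$. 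Terms with at least one of $r_1,r_2$ but no $r_0$ are bounded crudely using the no-critical-point estimate $\|r_1\|_p,\|r_2\|_p=O(h)$ of \eqref{eq: no critical point estimate}, giving $h^{-1}\cdot O(h)=O(1)$, and terms with two or more remainders are $o(1)$ by Hölder together with \eqref{eq:CZ_rh_norms} and \eqref{eq: no critical point estimate}.

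Finally, the terms of $I_{2,3}$ with no $h^{-1}$ prefactor — those coming from $\p_k(\gamma^{-1/2})\hat v_1$ and from $\gamma^{-1/2}e^{(\Psi+\Phi)/h}\p_k(a_1+r_1)$ — are each either an oscillatory integral with smooth amplitude, hence $O(h)$ by stationary phase, or contain a remainder or a derivative $\p r_1,\ol\p r_1$ and are $O(h)$ by \eqref{eq:CZ_rh_norms}, \eqref{eq: no critical point estimate} and \eqref{eq_cald_zygm}. Summing all contributions gives $I_{2,3}=O(1)$, hence $hI_{2,3}\to0$. The only genuinely delicate step is the $r_0$-term, where the high-order vanishing of $T$ on $\p\Om$ is precisely what licenses the integration by parts; everything else is routine once one has observed that the absence of a second CGO-derivative forbids any $h^{-2}$.
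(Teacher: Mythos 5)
Your proposal is correct and reaches the right conclusion, but it is worth comparing against the paper's much shorter argument and noting that you over-engineer one step. The paper observes that, in holomorphic coordinates, $-iI_{2,3}=\sum_{k=1}^2\int_\Omega C_1\,\p v_1\,\hat v_0\hat v_2+C_2\,\ol\p v_1\,\hat v_0\hat v_2\,dz\,d\ol z$ with $C_1,C_2$ smooth, and then simply points out that these terms are \emph{of the same form} as the $A_1$-, $A_2$-, $A_3$-type integrals already analyzed in Lemma~\ref{lemma_A}; they are therefore $O(h^{-1/2+\eps})$, which is $o(h^{-1})$ and gives $hI_{2,3}\to0$. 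Your structural observation — that $I_{2,3}$ carries only one CGO-derivative and hence cannot produce an $h^{-2}$ term — is exactly the right intuition and is the reason the lemma holds, but the paper does not bother upgrading the bound to $O(1)$.

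The one place your proof does something genuinely extra is the $r_0$-term in the $h^{-1}$ part, where you invoke the integration-by-parts identity \eqref{eq_integrate_by_parts_doo} (valid here since the smooth coefficient contains $T$, which vanishes to high order on $\p\Omega$). This is a legitimate move — the paper uses the same trick for the $h^{-2}$ term in the $A_5$ analysis, see \eqref{A_5_last} — but for $I_{2,3}$ it is unnecessary: the crude bound $\|r_0\|_{L^2}=O(h^{1/2+\eps})$ from \eqref{eq:CZ_rh_norms} already yields $h^{-1}\cdot O(h^{1/2+\eps})=O(h^{-1/2+\eps})$, which vanishes after multiplication by $h$. So what you label ``the only genuinely delicate step'' is in fact optional here, and the paper bypasses it entirely. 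A very minor inaccuracy: a few of your $O(h)$ claims for the $h^0$-terms should really be $O(h^{1/2+\eps})$ (the remainder $r_0$ only has $L^2$-decay $h^{1/2+\eps}$, not $h$, because $\Phi$ has a critical point), but this does not affect the conclusion.
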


\begin{proof}
	For $I_{2,3}$ we get the following: 
	\begin{align*}
		-iI_{2,3}&=\sum_{k=1}^2\int_{\Om} i^{k-1}cT^kQ\p v_1 \hat{v}_0\hat{v}_2 + (-i)^{k-1}cT^kQ\dbar v_1 \hat{v}_0\hat{v}_2\,dzd\oz\\\notag
		&= \sum_{k=1}^2\int_{\Om} C_1\p v_1 \hat{v}_0\hat{v}_2 + C_2\dbar v_1 \hat{v}_0\hat{v}_2\,dzd\oz.
	\end{align*}
	These are at most $O(h^{-1/2+\e})$ by the same arguments as for the terms with $A_1, A_2$ and $A_3$ (see before and after equation \eqref{A_2}).
\end{proof}

Then we return to the integral $I_1$, that is to
\begin{equation*}
	I_1 = \sum_{k=1}^{2}\int_{\Om}T^k \p_kv_0g(\nabla v_1, \nabla v_2)\,dV_g.
\end{equation*}

\begin{lem}\label{lemma_D}
	For $I_1$ we have $\lim_{h\to 0} hI_1=0.$
\end{lem}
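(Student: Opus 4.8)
The plan is to substitute the CGO solutions of Section~\ref{sec:CGOs} into $I_1$ and carry out the stationary‑phase analysis exactly as in the proofs of Lemmas~\ref{lemma_A}--\ref{lemma_C}. The crucial difference from $I_{2,1},I_{2,2}$ is that the would‑be leading $h^{-2}$ contribution of $I_1$ comes with an amplitude that \emph{vanishes at the stationary point} $z_0$, so it is in fact only $O(1)$; since every other term turns out to be $O(h^{-1/2+\eps})$ or better, one gets $I_1=O(h^{-1+\eps})$ and hence $hI_1\to0$.

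Concretely, I would work in holomorphic coordinates $z=x+iy$ centred at the critical point $z_0$ of $\Phi$ and use $\p_k=i^{k-1}\p+(-i)^{k-1}\dbar$, $g(\nabla v_1,\nabla v_2)=2c^{-1}(\p v_1\dbar v_2+\dbar v_1\p v_2)$ and $dV_g=\tfrac i2 c\,dzd\oz$ to write the integrand of $I_1$ as $i\sum_k T^k\big(i^{k-1}\p v_0+(-i)^{k-1}\dbar v_0\big)\big(\p v_1\dbar v_2+\dbar v_1\p v_2\big)\,dzd\oz$, with $v_k=\gamma^{-1/2}\hat v_k$. Two observations drive everything. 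First, $\p$ annihilates $e^{-2\ol\Phi/h}$ and $a_0$, so $\p v_0$ has no $h^{-1}$ term and the only $h^{-1}$ part of $\p_k v_0$ is $(-i)^{k-1}$ times $-\tfrac2h\dbar\ol\Phi\,\gamma^{-1/2}e^{-2\ol\Phi/h}(a_0+r_0)$. Second, $\dbar$ annihilates the holomorphic phases of $\hat v_1,\hat v_2$, so $\dbar v_1,\dbar v_2$ have no $h^{-1}$ term, while the $h^{-1}$ parts of $\p v_1,\p v_2$ carry the factors $\p(\Psi+\Phi),\p(-\Psi+\Phi)$. Hence the only $h^{-2}$ contribution to $I_1$ is the one in which the $h^{-1}$ part of $\dbar v_0$ meets the $h^{-1}$ parts of $\p v_1,\p v_2$ inside $g(\nabla v_1,\nabla v_2)$, and \emph{after adding the two summands} $\p v_1\dbar v_2$ and $\dbar v_1\p v_2$ the combination $\p(\Psi+\Phi)+\p(-\Psi+\Phi)=2\p\Phi$ emerges. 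Since $\Phi$ is Morse and $z_0$ is its critical point, $\p\Phi(z_0)=0$, so this term equals
\[
\frac1{h^2}\sum_k c_k\int_\Om T^k\,\dbar\ol\Phi\;\p\Phi\;\gamma^{-1}\dbar(\gamma^{-1/2})\,(a_0+r_0)(a_1+r_1)(a_2+r_2)\,e^{(2\Phi-2\ol\Phi)/h}\,dzd\oz
\]
for constants $c_k$; for the remainder‑free piece the amplitude is smooth, vanishes to high order on $\p\Om$ (since $T$ does, by Section~\ref{sec_boundary}) and vanishes at $z_0$ (since $\p\Phi(z_0)=0$), so stationary phase with the Morse phase $4\Imm\Phi$ yields $O(h^2)$, i.e.\ an $O(1)$ contribution to $I_1$.

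For the surviving terms I would invoke the machinery of Section~\ref{sec:CGOs} exactly as in Lemmas~\ref{lemma_A}--\ref{lemma_C}: in the $h^{-2}$ term the $r_0$‑contributions are $O(h^{-1+\eps})$ by the integration‑by‑parts estimate \eqref{eq_integrate_by_parts_doo}, and the $r_1,r_2$‑contributions are $O(1)$, using $\|r_j\|_{L^p},\|\dbar r_j\|_{L^p}=O(h)$ together with $\dbar r_j=-e^{-2i\psi_j/h}s_j$ and the expansion \eqref{eq: no critical point expansion II} (which re‑orients their phases to nonstationary ones); the same device absorbs the $\dbar r_j$‑contributions to the $h^{-1}$ part of $g(\nabla v_1,\nabla v_2)$, whose amplitude is already $O(1)$ by \eqref{eq: no critical point estimate}. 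All genuinely lower‑order terms — an $h^{-1}$ factor paired with an $O(1)$ factor, or two $O(1)$ factors — are $O(1)$ by stationary phase or \eqref{eq:CZ_rh_norms}, \eqref{eq: no critical point estimate}, just as in the computations around \eqref{A_2}. Collecting, $I_1=O(h^{-1+\eps})$, whence $\lim_{h\to0}hI_1=0$.

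The one genuinely new point — everything else being the CGO bookkeeping already performed in Lemmas~\ref{lemma_A}--\ref{lemma_C} — is the cancellation $\p(\Psi+\Phi)+\p(-\Psi+\Phi)=2\p\Phi$ combined with $\p\Phi(z_0)=0$: this is exactly what demotes the $h^{-2}$ part of $I_1$ from the order $h^{-1}$ that the analogous terms had in Lemmas~\ref{lemma_A},~\ref{lemma_B} down to order $1$, so that $\lim hI_1=0$ rather than a nonzero constant. The step I expect to require care is feeding the vanishing of the amplitude at $z_0$ into the stationary‑phase expansion so as to genuinely gain the extra power of $h$, and handling the $\dbar r_1,\dbar r_2$ terms — whose phases are no longer stationary — with the full strength of the machinery rather than crude $L^2$ bounds.
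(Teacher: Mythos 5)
Your proof is correct and uses the same CGO/stationary‑phase machinery as the paper's proof; the only real divergence is the mechanism you invoke to kill the would‑be $h^{-2}$ contribution. You keep $\p v_1\dbar v_2 + \dbar v_1\p v_2$ together so that the two phase derivatives combine into $\p(\Psi+\Phi)+\p(-\Psi+\Phi)=2\p\Phi$, and then use $\p\Phi(z_0)=0$. The paper instead brute‑forces the expansion into $D_1,\dots,D_4$ and then into the eight products of $\hat v_j$ and their derivatives; there the two summands are treated separately, so no such cancellation occurs, and what saves the $h^{-2}$ term in each of $D_3,D_4$ individually is the factor $\dbar\ol\Phi$ coming from $\dbar\hat v_0$. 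Since $\p$ annihilates the antiholomorphic phase $e^{-2\ol\Phi/h}$ and the antiholomorphic amplitude $a_0$, any $h^{-1}$ part of $\p_k v_0$ must arise through $\dbar\hat v_0$ and therefore carries a $\dbar\ol\Phi$ factor, which vanishes at $z_0$ because $\dbar\ol\Phi=\ol{\p\Phi}$. That single first‑order vanishing already yields $O(h^2)$ from stationary phase and hence $O(1)$ for the $h^{-2}$ term, so $hI_1\to 0$. In other words, the $\p\Phi$‑cancellation you single out as ``the one genuinely new point'' is valid but superfluous: the $\dbar\ol\Phi$ factor, which is in fact visible in the very amplitude you wrote down (it sits next to $\p\Phi$), does the job on its own, and it is what the paper's term‑by‑term treatment leans on. Your handling of the $r_j$'s — the integration‑by‑parts estimate \eqref{eq_integrate_by_parts_doo} for $r_0$, and the expansion \eqref{eq: no critical point expansion II} to re‑orient the phase for $\dbar r_1,\dbar r_2$ — matches the paper's and is adequate; a crude $L^2$ bound would indeed give only $O(h^{-1})$ for the single‑remainder $h^{-2}$ pieces, so you were right to flag that step.
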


\begin{proof}
	We immediately use \eqref{holomorp_coord} to have
	\begin{align*}
		&\sum_{k=1}^{2} \int_{\Om} T^k\p_kv_0g(\nabla v_1, \nabla v_2)\,dV_g\\
		&= \sum_{k=1}^2 i \int_{\Om} T^k\left(i^{k-1}\p v_0 + (-i)^{k-1}\dbar v_0\right)\left(\p v_1 \dbar v_2 + \dbar v_1\p v_2\right)\,dzd\oz\\
		&= \sum_{k=1}^2 i \int_{\Om} D_1\p v_0 \p v_1\dbar v_2 + D_2\p v_0 \dbar v_1\p v_2 + D_3\dbar v_0 \p v_1\dbar v_2 + D_4\dbar v_0 \dbar v_1\p v_2\,dzd\oz.
	\end{align*}
	Here all $D_j$ again depend on the summation index $k$.
	
	Starting with the $D_1$ term we expand further (and as before we do not write the summation):
	\begin{align}\label{D_1_tilde}
		&\int_{\Om} D_1\p v_0 \p v_1\dbar v_2\,dzd\oz\\\notag
		&= \int_{\Om}\tilde D_1 \hat{v}_0\hat{v}_1\hat{v}_2 + \tilde D_1\hat{v}_0\hat{v}_1\dbar \hat{v}_2 + \tilde D_1\hat{v}_0\p \hat{v}_1 \hat{v}_2 + \tilde D_1\hat{v}_0\p \hat{v}_1\dbar \hat{v}_2 + \tilde D_1 \p \hat{v}_0\hat{v}_1\dbar \hat{v}_2 \\\notag
		&+ \tilde D_1\p \hat{v}_0\p \hat{v}_1 \hat{v}_2 + \tilde D_1 \p \hat{v}_0\hat{v}_1\hat{v}_2 + \tilde D_1\p \hat{v}_0 \p \hat{v}_1\dbar \hat{v}_2\,dzd\oz.
	\end{align}
	Here we have abused notation using $\tilde D_1$ for all the terms that do not depend on $h$. The first six terms are similar to $A_1, A_3, A_2, B_5, B_4$ and $B_3$, in that order, and they are $O(h^{-1/2+\e})$. For the first remaining term we have
	\begin{align*}
		&\int_{\Om}\tilde D_1 \p \hat{v}_0\hat{v}_1\hat{v}_2\,dzd\oz\\
		&= \int_{\Om}\tilde D_1 \p \hat{v}_0 \hat{v}_1\hat{v}_2\,dzd\oz\\
		&=-\frac{2}{h}\int_{\Om}\tilde D_1 \p\ol{\Phi} e^{-2\ol{\Phi}/h}(a_0+r_0)\hat{v}_1\hat{v}_2 + \tilde D_1e^{(2\Phi - 2\ol{\Phi})/h}\p r_0(a_1 + r_1)(a_2 + r_2)\,dzd\oz.
	\end{align*}
	The first term is $O(1)$ by stationary phase and the fact that $\p\ol{\Phi}(z_0)=0$ and the second term is $O(h^{1/2+\e})$ because of the term $\p r_0$ (see \eqref{eq:CZ_rh_norms}). For the last term in \eqref{D_1_tilde} we use that $\p a_0 =\dbar a_2 = \dbar(-\Psi + \Phi)=0=\dbar\ol{\Phi}(z_0)$ and stationary phase to have
	\begin{align*}
		&\int_{\Om}\tilde D_1\p \hat{v}_0\p \hat{v}_1\dbar \hat{v}_2\,dzd\oz\\
		&= \int_{\Om}e^{(2\Phi-2\ol{\Phi})/h}\p r_0\left(\frac{1}{h}\p(\Psi+\Phi)(a_1+r_1) + \p a_1 + \p r_1\right)\p r_2\,dzd\oz\\
		&= O(h^{1/2+\e}).
	\end{align*}
	In the last equality we used the estimates for $\p r_0, \p r_2$ (see \eqref{eq:CZ_rh_norms}, \eqref{eq: no critical point estimate}).
	
	The term with $D_2$ goes similarly as above with the term $D_1$ because $v_1$ and $v_2$ have holomorphic phases.
	
	Moving to the term with $D_3$ we have (using similar notation as with $D_1$)
	\begin{align*}
		&\int_{\Om}D_3\dbar v_0 \p v_1\dbar v_2\,dzd\oz\\
		&= \int_{\Om}\tilde D_3\hat{v}_0\hat{v}_1\hat{v}_2 + \tilde D_3\hat{v}_0\hat{v}_1\dbar \hat{v}_2 + \tilde D_3\hat{v}_0\p \hat{v}_1\hat{v}_2 + \tilde D_3\hat{v}_0\p \hat{v}_1\dbar \hat{v}_2 + \tilde D_3\dbar \hat{v}_0\hat{v}_1\dbar \hat{v}_2\\
		&+ \tilde D_3\dbar \hat{v}_0\hat{v}_1\hat{v}_2 + \tilde D_3\dbar \hat{v}_0\p \hat{v}_1\dbar \hat{v}_2\,dzd\oz.
	\end{align*}
	Again the first six terms are similar to ones before. These are $O(h^{-1/2+\e})$ by the arguments made for $A_1, A_3, A_2, B_5, B_6$ and $B_1$. The first remaining term is
	\begin{align*}
		&\int_{\Om}\tilde D_3\dbar \hat{v}_0 \hat{v}_1\hat{v}_2\,dzd\oz\\
		&= \int_{\Om}e^{(2\Phi-2\ol{\Phi})/h}\left(\left(-\frac{2}{h}\dbar \ol{\Phi}(a_0 + r_0) + \dbar a_0 + \dbar r_0\right)(a_1+r_1)(a_2+r_2)\right)\,dzd\oz\\
		&= O(1)
	\end{align*}
	by stationary phase and the estimates for $r_k$ (see \eqref{eq:CZ_rh_norms}, \eqref{eq: no critical point estimate}). The last remaining term is, by using again that the functions $a_2, \Phi$ and $\Psi$ are holomorphic,
	\begin{align*}
		&\int_{\Om}\tilde D_3 \dbar \hat{v}_0\p \hat{v}_1\dbar \hat{v}_2\,dzd\oz\\
		&= \int_{\Om}\tilde D_3 e^{(2\Phi -2\ol{\Phi})/h}\left(-\frac{2}{h}\dbar\ol{\Phi}(a_0+r_0) + \dbar a_0 + \dbar r_0\right)\\
		&\times\left(\frac{1}{h}\p(\Psi+\Phi)(a_1+r_1) + \p a_1 + \p r_1\right)\dbar r_2\,dzd\oz\\
		&= -\frac{2}{h^2}\int_{\Om}\tilde D_3 e^{(2\Phi -2\ol{\Phi})/h}\dbar\ol{\Phi}\p(\Psi+\Phi)a_1a_0\dbar r_2\,dzd\oz + O(h^{-1/2+\e})
	\end{align*}
	because of the estimates for $r_1, r_0$ and $\dbar r_2$ (see \eqref{eq:CZ_rh_norms}, \eqref{eq: no critical point estimate}). For the last term, we recall that (see \eqref{eq:rh_form} - \eqref{eq:sh_formula})
	\begin{align*}
		r_2 &= - \dbar^{-1}_{\Imm(-\Psi + \Phi)}s_2 = \dbar^{-1}e^{i\Imm(-\Psi + \Phi)/h}s_2\\
		s_2 &= -\op_{\Imm(-\Psi + \Phi)}^{*-1}(qa) - \sum_{j=1}^{\infty}T_2^j\op_{\Imm(-\Psi + \Phi)}^{*-1}(qa)
	\end{align*}
	and thus in holomorphic coordinates, denoting $\eta= \Imm(-\Psi + \Phi)$,
	\begin{align*}
		\dbar r_2 &= -e^{i\eta/h}s_2\\
		&= e^{i\eta/h}\p^{-1}(e^{i\eta/h}qa) + e^{i\eta/h}\sum_{j=1}^{\infty}T_2^j\p^{-1}(e^{i\eta/h}qa)\\
		&= e^{i\eta/h}\left(\frac{ih}{2}e^{i\eta/h}\frac{qa}{\p\eta} + \frac{ih}{2}\p^{-1}\left(e^{i\eta/h}\p\left(\frac{qa}{\p\eta}\right)\right)\right) + e^{i\eta/h}\sum_{j=1}^{\infty}T_2^j\p^{-1}(e^{\eta/h}qa).
	\end{align*}
    Here we used the expansion \eqref{eq: no critical point expansion II}.
	Now
	\begin{align*}
		-\frac{2}{h^2}\int_{\Om}\tilde D_3 e^{(2\Phi -2\ol{\Phi})/h}\dbar\ol{\Phi}\p(\Psi+\Phi)a_1a_0\frac{ih}{2}e^{2i\eta/h}\frac{qa}{\p\eta}\,dzd\oz = O(1)
	\end{align*}
	by stationary phase.
	Recall that 
    $$\p^{-1}\left(e^{i\eta/h}\p\left(\frac{qa}{\p\eta}\right)\right) = O(h^{1/2+\e}),\quad \p^{-1}(e^{i\eta/h}qa) = O(h)$$
    (by the estimate \eqref{eq:sobo_decayL2} and the expansion \eqref{eq: no critical point expansion II}) and $T_h = O(h^{1/2-\e})$. Then the remaining terms in $\int_{\Om}\tilde D_3 \dbar \hat{v}_0\p \hat{v}_1\dbar \hat{v}_2\,dzd\oz$ are $O(h^{3/2-\e})$
	
	Again the term with $D_4$ goes similarly as above with the term $D_3$ because $v_1$ and $v_2$ have holomorphic phases.
\end{proof}
Recall the definition of $T$ in \eqref{T_def} and that the arguments in Lemmas \ref{lemma_A}, \ref{lemma_B}, \ref{lemma_C} can be applied to the integral $I_3$ with the same results. Then we have the following for $T$:
\begin{lem}\label{T_PDE_1}
	The components $T^1, T^2$ of $T$ satisfy the elliptic equation
	\begin{equation}\label{T^k_equation}
		\p\dbar(T^1+iT^2) + E\p(T^1 + iT^2) + \p E(T^1 + iT^2) = 0\quad \text{in  }\Om
	\end{equation}
	for a smooth function $E$.
\end{lem}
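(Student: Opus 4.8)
The plan is to read off equation \eqref{T^k_equation} directly from the integral identity \eqref{int_identity} by inserting a suitable CGO triple and taking the semiclassical limit $h\to0$. First I would record that, since the two sources give the same DN map and — by Section~\ref{sec_boundary} — agree together with all their normal derivatives on $\p\Om$, the boundary contribution satisfies $B-\tilde B=0$; hence \eqref{int_identity} asserts that $I-\tilde I=0$ \emph{identically in $h$}. Then I would fix an arbitrary interior point, pass to an isothermal coordinate $z$ in which $g=c\,dz\,d\oz$ and $\gamma=\abs{g}^{-1/2}$ (so $\gamma=1/c$), write $I-\tilde I=I_1+I_2+I_3$ with $I_2=I_{2,1}+I_{2,2}+I_{2,3}$ and analogously for $I_3$, and substitute the CGO solutions $\hat v_0=e^{-2\ol\Phi/h}(a_0+r_0)$, $\hat v_1=e^{(\Psi+\Phi)/h}(a_1+r_1)$, $\hat v_2=e^{(-\Psi+\Phi)/h}(a_2+r_2)$ of Section~\ref{sec_T=0}, with $\pm\Psi+\Phi$ free of critical points and $\Phi$ having a critical point at the base point $z_0$. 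This is where I use that $T$ vanishes to high order on $\p\Om$ (Section~\ref{sec_boundary}), which is precisely what legitimizes the integrations by parts and stationary-phase estimates underlying Lemmas~\ref{lemma_A}--\ref{lemma_D}.

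Next I would feed in those lemmas. By Lemmas~\ref{lemma_C}, \ref{lemma_D} and the $I_3$-analogue of Lemma~\ref{lemma_C}, the quantities $hI_1$, $hI_{2,3}$, $hI_{3,3}$ tend to $0$, while by Lemmas~\ref{lemma_A} and~\ref{lemma_B}
\begin{equation*}
	hI_{2,1}\longrightarrow \sum_{k=1}^2 C_1\,i^{k-1}(A_5\gamma^{-1})|_{z_0}, \qquad hI_{2,2}\longrightarrow \sum_{k=1}^2 i\,C_2\,(B_7\gamma^{-1/2})|_{z_0},
\end{equation*}
where $A_5=2T^kc^{-1}\dbar(\gamma^{-1/2})+\dbar T^k\gamma^{-1/2}$, $B_7=i^{k-1}T^k\dbar(\gamma^{-1/2})$ and $C_1,C_2\neq0$ are the stationary-phase constants. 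I would then note that the $I_3$-contributions $hI_{3,1},hI_{3,2}$ give the same limits: interchanging $v_1$ and $v_2$ is the substitution $\Psi\mapsto-\Psi$, and since $z_0$ is a critical point of $\Phi$ one has $\p(\pm\Psi+\Phi)(z_0)=\pm\p\Psi(z_0)$, so the amplitudes $(\p(\pm\Psi+\Phi))^2$ and $\p(\Psi+\Phi)\,\p(-\Psi+\Phi)$ occurring in Lemmas~\ref{lemma_A} and~\ref{lemma_B} take the same values at $z_0$. Multiplying $I-\tilde I\equiv0$ by $h$ and letting $h\to0$ therefore forces, at the arbitrary point $z_0$ and hence throughout $\Om$,
\begin{equation*}
	\sum_{k=1}^2 i^{k-1}\bigl(C_1\,A_5\gamma^{-1}+i\,C_2\,B_7\gamma^{-1/2}\bigr)=0 .
\end{equation*}
Writing $w:=T^1+iT^2$ and using $\sum_k i^{k-1}T^k=w$, $\sum_k i^{k-1}\dbar T^k=\dbar w$, this collapses to
\begin{equation*}
	C_1\gamma^{-3/2}\dbar w+\bigl(2C_1c^{-1}\gamma^{-1}+i\,C_2\gamma^{-1/2}\bigr)\dbar(\gamma^{-1/2})\,w=0 ,
\end{equation*}
i.e.\ a first-order equation $\dbar w+Ew=0$ with $E$ the smooth function obtained by dividing the bracket by $C_1\gamma^{-3/2}$ (smooth because $\gamma=1/c$ with $c>0$).

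Applying $\p$ to $\dbar w+Ew=0$ and expanding $\p(Ew)=\p E\,w+E\,\p w$ then yields exactly \eqref{T^k_equation}, which is elliptic since its principal part is $\p\dbar=\tfrac14\Delta$. (One may equally view the content of the lemma as the statement that $\dbar w+Ew$ is antiholomorphic, which is the $\p$-differentiated, and slightly weaker, form of the first-order equation.)

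The hard part is the bookkeeping in the middle step: one must verify that, among the many terms produced by expanding $I_1,I_2,I_3$ against the CGO triple, exactly the $O(h^{-1})$ contributions isolated in Lemmas~\ref{lemma_A}--\ref{lemma_D} survive multiplication by $h$ and the limit; that the stationary-phase constants are tracked consistently — in particular the constant in Lemma~\ref{lemma_B} inherits the sign of $\p(\Psi+\Phi)\,\p(-\Psi+\Phi)$ at $z_0$ whereas the one in Lemma~\ref{lemma_A} inherits that of $(\p(\Psi+\Phi))^2$ at $z_0$; and that the $I_3$-contributions genuinely mirror the $I_2$-ones, so that the coefficient $C_1\gamma^{-3/2}$ of $\dbar w$ does not accidentally cancel and leave only a trivial identity. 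A minor additional point is that the derivation is carried out in a local isothermal chart around each interior point; since $(\Om,g)$ admits global isothermal coordinates (fixed once and for all), $E$ is a single well-defined smooth function on $\Om$ in those coordinates.
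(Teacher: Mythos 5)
Your proof follows the paper's argument essentially verbatim: multiply \eqref{int_identity} by $h$, invoke Lemmas \ref{lemma_A}--\ref{lemma_D} to isolate the $O(h^{-1})$ contributions (with the $I_3$ terms mirroring the $I_2$ ones), collect the $A_5$ and $B_7$ pieces into the first-order equation $\dbar(T^1+iT^2)+E(T^1+iT^2)=0$, and differentiate once more. Your choice to apply $\p$ in the last step is correct and in fact fixes a typo in the paper's proof (which reads ``operating with $\dbar$'', but applying $\dbar$ to $\dbar w + Ew=0$ would give $\dbar^2 w + E\dbar w + \dbar E\, w = 0$ rather than \eqref{T^k_equation}).
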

\begin{proof}
	Multiplying $I-\tilde I$ by $h$ and taking the limit as $h\to 0$ we get after using Lemmas \ref{lemma_A}, \ref{lemma_B}, \ref{lemma_C}, \ref{lemma_D}
	\begin{equation*}
		\sum_{k=1}^2 2Ci^{k-1}(A_5\gamma^{-1})|_{z=z_0} + i\tilde C(B_7\gamma^{-1/2})|_{z=z_0} = 0.
	\end{equation*}
	Recall that $A_5=2T^kc^{-1}\dbar(\gamma^{-1/2}) + \dbar T^k\gamma^{-1/2}$ and $B_7=i^{k-1}T^k\dbar(\gamma^{-1/2})$.
	Doing the above argument for a dense set of points implies that $T_1, T_2$ satisfy
	\begin{align*}
		\sum_{k=1}^2 i^{k-1}\dbar T^k + \frac{i^{k-1}}{2C}\gamma^{3/2}\left(4Cc^{-1}\dbar(\gamma^{-1/2})\gamma^{-1} + i\tilde C \dbar(\gamma^{-1/2})\gamma^{-1/2}\right)T^k = 0.
	\end{align*}
	Letting $E:= \frac{1}{2C}\gamma^{3/2}\left(4Cc^{-1}\dbar(\gamma^{-1/2})\gamma^{-1} + i\tilde C \dbar(\gamma^{-1/2})\gamma^{-1/2}\right)$ we have
	\begin{align*}
		\dbar(T^1 + iT^2)+E(T^1 + iT^2) = 0.
	\end{align*}
	Operating with $\dbar$ on both sides gives \eqref{T^k_equation}.
\end{proof}

Next we choose solutions $v_k = \gamma^{-1/2}\hat{v}_k$, $k=0,1,2$, such that
\begin{align*}
	\hat{v}_0 &= e^{2\Phi/h}(\ol{a}_0 + r_0)\\
	\hat{v}_1 &= e^{-(\ol{\Psi}+\ol{\Phi})/h}(\ol{a}_1 + r_1)\\
	\hat{v}_2 &= e^{-(-\ol{\Psi}+\ol{\Phi})/h}(\ol{a}_2 + r_2)
\end{align*}
where $\Phi, \Psi, a_k$, $k=0,1,2$, are as before. Then we look at the integral $I-\tilde I$ with these solutions. The analysis is in most parts the same. The differences come from the terms $I_{2,1}$ and $I_{2,2}$ (and of course $I_{3,1}$ and $I_{3,2}$ but they are identical to $I_{2,1}$ and $I_{2,2}$). With the solutions above we get
\begin{align*}
	\lim_{h\to 0} hI_{2,1} &= \sum_{k=1}^2C(-i)^{k-1}(A_4\gamma^{-1})|_{z=z_0}\\
	\lim_{h\to 0} hI_{2,2} &= \sum_{k=1}^2i\tilde C(B_6\gamma^{-1/2})|_{z=z_0},
\end{align*}
where
\begin{align*}
	A_4 = \Big(2T^kc^{-1}\dbar(\gamma^{-1/2}) + \dbar T^k\gamma^{-1/2}\Big),\quad B_6 = (-i)^{k-1}T^k\p(\gamma^{-1/2}).
\end{align*}
Then, similarly as for Lemma \ref{T_PDE_1} we have the following:
\begin{lem}\label{T_PDE_2}
	The components $T^1, T^2$ of $T$ satisfy the elliptic equation
	\begin{equation*}
		\p\dbar(T^1-iT^2) + \tilde E\p(T^1 - iT^2) + \p \tilde E(T^1 - iT^2) = 0\quad \text{in  }\Om
	\end{equation*}
	for a smooth function $\tilde E$.
\end{lem}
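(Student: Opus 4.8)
The plan is to run the argument of Lemma~\ref{T_PDE_1} a second time, now with the complex conjugated CGO family $v_k = \gamma^{-1/2}\hat v_k$, $k=0,1,2$, introduced just above, where $\hat v_0 = e^{2\Phi/h}(\ol a_0 + r_0)$ carries a holomorphic phase and $\hat v_1,\hat v_2$ carry antiholomorphic phases. The point is that complex conjugation interchanges the roles of $\p$ and $\dbar$, of holomorphic and antiholomorphic amplitudes, and of the pairs $r_j\leftrightarrow\tilde r_j$, $s_j\leftrightarrow\tilde s_j$, $T_h\leftrightarrow\tilde T_h$, while all the mapping and decay estimates \eqref{eq:sobo_decay}--\eqref{eq_cald_zygm} together with the integration by parts identity \eqref{eq_integrate_by_parts_doo} are invariant under this symmetry. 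Consequently the vanishing statements $\lim_{h\to0}hI_1=0$ and $\lim_{h\to0}hI_{2,3}=0$ proved in Lemmas~\ref{lemma_D} and~\ref{lemma_C}, as well as their $I_3$ analogues, transfer verbatim, so that once again only $I_{2,1},I_{2,2}$ and the identical $I_{3,1},I_{3,2}$ survive after multiplication by $h$.

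I would then recompute these surviving integrals exactly as in the proofs of Lemmas~\ref{lemma_A} and~\ref{lemma_B}, keeping track of the $\p\leftrightarrow\dbar$ swap. The triple product of the three conjugated CGOs again has the Morse phase $e^{(2\Phi-2\ol\Phi)/h}$, whose imaginary part has a nondegenerate critical point at the chosen interior point $z_0$, so stationary phase produces, as recorded above,
\begin{equation*}
	\lim_{h\to0}hI_{2,1} = \sum_{k=1}^2 C(-i)^{k-1}(A_4\gamma^{-1})|_{z=z_0}, \qquad \lim_{h\to0}hI_{2,2} = \sum_{k=1}^2 i\tilde C(B_6\gamma^{-1/2})|_{z=z_0},
\end{equation*}
with $A_4 = 2T^kc^{-1}\dbar(\gamma^{-1/2}) + \dbar T^k\gamma^{-1/2}$ and $B_6 = (-i)^{k-1}T^k\p(\gamma^{-1/2})$; the $I_3$ contributions coincide with these because $v_1$ and $v_2$ enter the identity symmetrically.

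Adding these contributions, setting $\lim_{h\to0}h(I-\tilde I)=0$, and letting the interior point range over a dense subset of $\Om$ gives the pointwise identity
\begin{equation*}
	\sum_{k=1}^2 (-i)^{k-1}\big(\dbar T^k + \tilde E\, T^k\big) = 0 \quad\text{in }\Om,
\end{equation*}
where $\tilde E$ is the smooth function obtained, exactly as $E$ was in Lemma~\ref{T_PDE_1}, by collecting the coefficients of $T^k$; explicitly $\tilde E = \frac{1}{2C}\gamma^{3/2}\big(4Cc^{-1}\dbar(\gamma^{-1/2})\gamma^{-1} + i\tilde C\,\dbar(\gamma^{-1/2})\gamma^{-1/2}\big)$. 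Since $\sum_k(-i)^{k-1}\dbar T^k = \dbar(T^1-iT^2)$, this says $\dbar(T^1-iT^2) + \tilde E(T^1-iT^2)=0$, and applying $\p$ — recalling $\p\dbar=\tfrac14\Delta$, which makes the resulting operator elliptic — yields the asserted equation.

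The step I expect to require the most care is not conceptual but clerical: verifying that under complex conjugation it is precisely the $\dbar\dbar_k\hat v_1$-term (the mirror of the $A_5$-term of Lemma~\ref{lemma_A}) and the $B_6$-term (the mirror of the $B_7$-term of Lemma~\ref{lemma_B}) that carry the leading $O(h^{-1})$ behavior, that every other term still lands in the $O(h^{-1+\epsilon})$ class, and that the identity \eqref{eq_integrate_by_parts_doo} is invoked against the correct remainder and phase (now $r_0$ or $r_1$ with the conjugated Morse phase, the high-order boundary vanishing of $T$ being inherited from Section~\ref{sec_boundary} exactly as before). Since the excerpt's Lemma~\ref{T_PDE_1} already carries out the analogous bookkeeping, this amounts to a faithful transcription rather than new work.
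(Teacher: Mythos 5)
Your proposal matches the paper's own treatment: the paper likewise remarks that the analysis with the conjugated CGO family is ``in most parts the same,'' records the $A_4$- and $B_6$-limits, and then states Lemma~\ref{T_PDE_2} ``similarly as for Lemma~\ref{T_PDE_1}.'' Your reconstruction of what ``similarly'' means — only the symmetry-swapped $A_4$ and $B_6$ terms survive the $h\to 0$ limit, leading to a first-order CR-type equation for $T^1-iT^2$ to which a single derivative is then applied — is exactly the paper's intended argument.
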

Since we assume that $T$ is known on the boundary to high order, we get by combining Lemma \ref{T_PDE_1} and Lemma \ref{T_PDE_2} with unique continuation that $T^1=T^2=0$.

\vskip0.5cm

\noindent\textbf{Acknowledgment.} The authors would like to thank Niko Jokela and Gyula Csat\'o for helpful discussion. 
T.L. was supported by the Academy of Finland (Centre of Excellence in Inverse Modeling and Imaging, grant numbers 284715 and 309963). J.N. was supported by the Finnish Centre of Excellence in Inverse Modelling and Imaging (Academy of Finland Grant 284715), the Research Council of Finland (Flagship of Advanced Mathematics for Sensing Imaging and Modelling grants 359208 and 359183) and by the Emil Aaltonen Foundation.

\bibliographystyle{alpha}
\bibliography{ref}

\newcommand{\etalchar}[1]{$^{#1}$}
\begin{thebibliography}{LLLS21b}

\bibitem[ABN20]{ABN_20_minimal}
Spyros Alexakis, Tracey Balehowsky, and Adrian Nachman.
\newblock Determining a {R}iemannian metric from minimal areas.
\newblock {\em Adv. Math.}, 366:107025, 71, 2020.

\bibitem[BIK18]{MR3810150}
Tommi Brander, Joonas Ilmavirta, and Manas Kar.
\newblock Superconductive and insulating inclusions for linear and non-linear conductivity equations.
\newblock {\em Inverse Probl. Imaging}, 12(1):91--123, 2018.

\bibitem[BKS15]{MR3320025}
Tommi Brander, Manas Kar, and Mikko Salo.
\newblock Enclosure method for the {$P$}-{L}aplace equation.
\newblock {\em Inverse Problems}, 31(4):045001, 16, 2015.

\bibitem[BKSU25]{bhatta2025}
Sombuddha Bhattacharyya, Katya Krupchyk, Suman~Kumar Sahoo, and Gunther Uhlmann.
\newblock Inverse problems for third-order nonlinear perturbations of biharmonic operators.
\newblock {\em Comm. Partial Differential Equations}, 50(3):407--440, 2025.

\bibitem[CDK11]{csato2011pullback}
Gyula Csat{\'o}, Bernard Dacorogna, and Olivier Kneuss.
\newblock {\em The pullback equation for differential forms}, volume~83.
\newblock Springer Science \& Business Media, 2011.

\bibitem[CF25]{carstea2025two}
Catalin Carstea and Ali Feizmohammadi.
\newblock Two uniqueness results in the inverse boundary value problem for the weighted p-laplace equation.
\newblock In {\em Forum of Mathematics, Sigma}, volume~13, page e147. Cambridge University Press, 2025.

\bibitem[CFK{\etalchar{+}}21]{CFKKU}
C.~I. C{\^a}rstea, A.~Feizmohammadi, Y.~Kian, K.~Krupchyk, and G.~Uhlmann.
\newblock The {C}alder$\backslash$'$\{$o$\}$ n inverse problem for isotropic quasilinear conductivities.
\newblock {\em Advances in Mathematics}, 391:107956, 2021.

\bibitem[CGP10]{chrusciel2010mathematical}
Piotr Chru{\'s}ciel, Gregory Galloway, and Daniel Pollack.
\newblock Mathematical general relativity: a sampler.
\newblock {\em Bulletin of the American Mathematical Society}, 47(4):567--638, 2010.

\bibitem[CLLO24]{carstea2024calderonproblemriemanniansurfaces}
C{\u{a}}t{\u{a}}lin~I C{\^a}rstea, Matti Lassas, Tony Liimatainen, and Lauri Oksanen.
\newblock An inverse problem for the riemannian minimal surface equation.
\newblock {\em Journal of Differential Equations}, 379:626--648, 2024.

\bibitem[CLLT23]{carstea2023inverseproblemgeneralminimal}
C{\u{a}}t{\u{a}}lin~I C{\^a}rstea, Matti Lassas, Tony Liimatainen, and Leo Tzou.
\newblock An inverse problem for general minimal surfaces.
\newblock {\em arXiv preprint arXiv:2310.14268}, 2023.

\bibitem[CLT24]{carstea2024calderonproblemriemanniansurface}
C{\u{a}}t{\u{a}}lin~I C{\^a}rstea, Tony Liimatainen, and Leo Tzou.
\newblock The calder$\backslash$'on problem on riemannian surfaces and of minimal surfaces.
\newblock {\em arXiv preprint arXiv:2406.16944}, 2024.

\bibitem[Don14]{dong2014holographic}
Xi~Dong.
\newblock Holographic entanglement entropy for general higher derivative gravity.
\newblock {\em Journal of High Energy Physics}, 2014(1):1--32, 2014.

\bibitem[FLL23]{FLL23}
Ali Feizmohammadi, Tony Liimatainen, and Yi-Hsuan Lin.
\newblock An inverse problem for a semilinear elliptic equation on conformally transversally anisotropic manifolds.
\newblock {\em Ann. PDE}, 9(2):Paper No. 12, 54, 2023.

\bibitem[FO20]{FO19}
Ali Feizmohammadi and Lauri Oksanen.
\newblock An inverse problem for a semi-linear elliptic equation in {R}iemannian geometries.
\newblock {\em Journal of Differential Equations}, 269(6):4683--4719, 2020.

\bibitem[GT01]{gilbarg2001elliptic}
David Gilbarg and Neil~S. Trudinger.
\newblock {\em Elliptic partial differential equations of second order}.
\newblock Classics in Mathematics. Springer-Verlag, Berlin, 2001.
\newblock Reprint of the 1998 edition.

\bibitem[GT11a]{guillarmou2011calderon}
Colin Guillarmou and Leo Tzou.
\newblock Calder{\'o}n inverse problem with partial data on {R}iemann surfaces.
\newblock {\em Duke Mathematical Journal}, 158(1):83--120, 2011.

\bibitem[GT11b]{guillarmou2011identification}
Colin Guillarmou and Leo Tzou.
\newblock Identification of a connection from cauchy data on a riemann surface with boundary.
\newblock {\em Geometric and Functional Analysis}, 21(2):393--418, 2011.

\bibitem[HMS11]{hung2011holographic}
Ling-Yan Hung, Robert~C Myers, and Michael Smolkin.
\newblock On holographic entanglement entropy and higher curvature gravity.
\newblock {\em Journal of High Energy Physics}, 2011(4):1--49, 2011.

\bibitem[IN95]{victorN}
Victor Isakov and A~Nachman.
\newblock Global uniqueness for a two-dimensional elliptic inverse problem.
\newblock {\em Trans.of AMS}, 347:3375--3391, 1995.

\bibitem[IS94]{isakov1994global}
Victor Isakov and John Sylvester.
\newblock Global uniqueness for a semilinear elliptic inverse problem.
\newblock {\em Communications on Pure and Applied Mathematics}, 47(10):1403--1410, 1994.

\bibitem[Isa93]{isakov1993uniqueness}
Victor Isakov.
\newblock On uniqueness in inverse problems for semilinear parabolic equations.
\newblock {\em Archive for Rational Mechanics and Analysis}, 124(1):1--12, 1993.

\bibitem[IY13]{imanuvilovyamamoto_semilinear}
Oleg Imanuvilov and Masahiro Yamamoto.
\newblock Unique determination of potentials and semilinear terms of semilinear elliptic equations from partial {C}auchy data.
\newblock {\em J. Inverse Ill-Posed Probl.}, 21(1):85--108, 2013.

\bibitem[JLST25]{jokela2025bulk}
Niko Jokela, Tony Liimatainen, Miika Sarkkinen, and Leo Tzou.
\newblock Bulk metric reconstruction from entanglement data via minimal surface area variations.
\newblock {\em arXiv preprint arXiv:2504.07016, To Appear in The Journal of High Energy Physics}, 2025.

\bibitem[JNS23]{johansson2023inverseproblemssemilinearelliptic}
David Johansson, Janne Nurminen, and Mikko Salo.
\newblock Inverse problems for semilinear elliptic pde with a general nonlinearity $a(x,u)$.
\newblock {\em arXiv preprint arXiv:2312.12196}, 2023.

\bibitem[JNS25]{johansson2025inverseproblemssemilinearelliptic}
David Johansson, Janne Nurminen, and Mikko Salo.
\newblock Inverse problems for semilinear elliptic equations with low regularity.
\newblock {\em arXiv preprint arXiv:2505.05278}, 2025.

\bibitem[KKU20]{kian2020partial}
Yavar Kian, Katya Krupchyk, and Gunther Uhlmann.
\newblock Partial data inverse problems for quasilinear conductivity equations.
\newblock {\em arXiv preprint arXiv:2010.11409}, 2020.

\bibitem[KLL24]{kianliimatainenlin2024}
Yavar Kian, Tony Liimatainen, and Yi-Hsuan Lin.
\newblock On determining and breaking the gauge class in inverse problems for reaction-diffusion equations.
\newblock {\em Forum Math. Sigma}, 12:Paper No. e25, 42, 2024.

\bibitem[KLU18]{kurylev2018inverse}
Yaroslav Kurylev, Matti Lassas, and Gunther Uhlmann.
\newblock Inverse problems for {L}orentzian manifolds and non-linear hyperbolic equations.
\newblock {\em Inventiones mathematicae}, 212(3):781--857, 2018.

\bibitem[KSU11]{Kenig2011}
Carlos~E. Kenig, Mikko Salo, and Gunther Uhlmann.
\newblock Inverse problems for the anisotropic {M}axwell equations.
\newblock {\em Duke Math. J.}, 157(2):369--419, 2011.

\bibitem[KU20]{MR4052205}
Katya Krupchyk and Gunther Uhlmann.
\newblock A remark on partial data inverse problems for semilinear elliptic equations.
\newblock {\em Proc. Amer. Math. Soc.}, 148(2):681--685, 2020.

\bibitem[KU23]{Krupchyk2020}
Katya Krupchyk and Gunther Uhlmann.
\newblock Inverse problems for nonlinear magnetic {S}chr\"{o}dinger equations on conformally transversally anisotropic manifolds.
\newblock {\em Anal. PDE}, 16(8):1825--1868, 2023.

\bibitem[KUY24]{Krupchyk2024remark}
Katya Krupchyk, Gunther Uhlmann, and Lili Yan.
\newblock A remark on inverse problems for nonlinear magnetic {S}chr\"odinger equations on complex manifolds.
\newblock {\em Proc. Amer. Math. Soc.}, 152(6):2413--2422, 2024.

\bibitem[Las25]{lassas2025introductioninverseproblemsnonlinear}
Matti Lassas.
\newblock Introduction to inverse problems for non-linear partial differential equations.
\newblock {\em arXiv preprint arXiv:2503.12448}, 2025.

\bibitem[Lin22]{Lin2022}
Yi-Hsuan Lin.
\newblock Monotonicity-based inversion of fractional semilinear elliptic equations with power type nonlinearities.
\newblock {\em Calc. Var. Partial Differential Equations}, 61(5):Paper No. 188, 30, 2022.

\bibitem[Lio97]{Lionheart_1997}
W~R~B Lionheart.
\newblock Conformal uniqueness results in anisotropic electrical impedance imaging.
\newblock {\em Inverse Problems}, 13(1):125, feb 1997.

\bibitem[LL19a]{lai2019global}
Ru-Yu Lai and Yi-Hsuan Lin.
\newblock Global uniqueness for the fractional semilinear {S}chr{\"o}dinger equation.
\newblock {\em Proceedings of the American Mathematical Society}, 147(3):1189--1199, 2019.

\bibitem[LL19b]{LaiLin2019}
Ru-Yu Lai and Yi-Hsuan Lin.
\newblock Global uniqueness for the fractional semilinear {S}chr\"odinger equation.
\newblock {\em Proc. Amer. Math. Soc.}, 147(3):1189--1199, 2019.

\bibitem[LL23]{LinLiu2023}
Yi-Hsuan Lin and Hongyu Liu.
\newblock Inverse problems for fractional equations with a minimal number of measurements.
\newblock {\em Commun. Anal. Comput.}, 1(1):72--93, 2023.

\bibitem[LL24]{liimatainenLin2024}
Tony Liimatainen and Yi-Hsuan Lin.
\newblock Uniqueness results for inverse source problems for semilinear elliptic equations.
\newblock {\em Inverse Problems}, 40(4):Paper No. 045030, 32, 2024.

\bibitem[LLLS21a]{LLLS2019inverse}
Matti Lassas, Tony Liimatainen, Yi-Hsuan Lin, and Mikko Salo.
\newblock Inverse problems for elliptic equations with power type nonlinearities.
\newblock {\em Journal de math{\'e}matiques pures et appliqu{\'e}es}, 145:44--82, 2021.

\bibitem[LLLS21b]{LLLS2021b}
Matti Lassas, Tony Liimatainen, Yi-Hsuan Lin, and Mikko Salo.
\newblock Partial data inverse problems and simultaneous recovery of boundary and coefficients for semilinear elliptic equations.
\newblock {\em Revista Matemática Iberoamericana}, 37:1553--1580, 2021.

\bibitem[LLST22]{liimatainen2022inverse}
Tony Liimatainen, Yi-Hsuan Lin, Mikko Salo, and Teemu Tyni.
\newblock Inverse problems for elliptic equations with fractional power type nonlinearities.
\newblock {\em Journal of Differential Equations}, 306:189--219, 2022.

\bibitem[LW23]{liimatainen2023calder}
Tony Liimatainen and Ruirui Wu.
\newblock Calderón problem for the quasilinear conductivity equation in dimension $2$.
\newblock {\em arXiv preprint arXiv:2309.11047}, 2023.

\bibitem[LZ23]{Lai2023partial}
Ru-Yu Lai and Ting Zhou.
\newblock Partial data inverse problems for nonlinear magnetic {S}chr\"odinger equations.
\newblock {\em Math. Res. Lett.}, 30(5):1535--1563, 2023.

\bibitem[Ma20]{ma2020note}
Yilin Ma.
\newblock A note on the partial data inverse problem for a nonlinear magnetic schr\"odinger operator on riemann surface.
\newblock {\em arXiv preprint arXiv:2010.14180}, 2020.

\bibitem[MT24]{muñozthon2024calderonsproblemharmonicmaps}
Sebasti{\'a}n Mu{\~n}oz-Thon.
\newblock A calderón's problem for harmonic maps.
\newblock {\em arXiv preprint arXiv:2411.01659}, 2024.

\bibitem[NS25]{nurminen2025inverseproblemnonlinearbiharmonic}
Janne Nurminen and Suman~Kumar Sahoo.
\newblock An inverse problem for a nonlinear biharmonic operator.
\newblock {\em arXiv preprint arXiv:2504.06624}, 2025.

\bibitem[Nur23a]{Nurminen2023single}
Janne Nurminen.
\newblock Determining an unbounded potential for an elliptic equation with a power type nonlinearity.
\newblock {\em J. Math. Anal. Appl.}, 523(1):Paper No. 126962, 11, 2023.

\bibitem[Nur23b]{nurminen2023}
Janne Nurminen.
\newblock An inverse problem for the minimal surface equation.
\newblock {\em Nonlinear Anal.}, 227:Paper No. 113163, 19, 2023.

\bibitem[Nur24]{Nurminen2024}
Janne Nurminen.
\newblock An inverse problem for the minimal surface equation in the presence of a {R}iemannian metric.
\newblock {\em Nonlinearity}, 37(9):Paper No. 095029, 22, 2024.

\bibitem[Sei97]{seifert1997configurations}
Udo Seifert.
\newblock Configurations of fluid membranes and vesicles.
\newblock {\em Advances in physics}, 46(1):13--137, 1997.

\bibitem[ST23]{Salo2022}
Mikko Salo and Leo Tzou.
\newblock Inverse problems for semilinear elliptic {PDE} with measurements at a single point.
\newblock {\em Proc. Amer. Math. Soc.}, 151(5):2023--2030, 2023.

\bibitem[Sun10]{sun2010inverse}
Ziqi Sun.
\newblock An inverse boundary-value problem for semilinear elliptic equations.
\newblock {\em Electronic Journal of Differential Equations (EJDE)}, 37:1--5, 2010.

\end{thebibliography}

\end{document}